\newtheorem{theorem}{Theorem}
\newtheorem{corollary}[theorem]{Corollary}
\newtheorem{definition}[theorem]{Definition}
\newtheorem{example}[theorem]{Example}
\newtheorem{lemma}[theorem]{Lemma}
\newtheorem{notation}[theorem]{Notation}
\newtheorem{proposition}[theorem]{Proposition}
\newtheorem{remark}[theorem]{Remark}
\newenvironment{proof}[1][Proof]{\noindent\textbf{#1.} }{\
\rule{0.5em}{0.5em}}
\def\noi{\noindent}
\def\C{\mathbb{C}}
\def\N{\mathbb{N}}
\def\V{\mathbb{V}}
\def\W{\mathbb{W}}
\def\ring{\C[\vec{x},\vec{y}]}
\def\vec{\mathbf}
\begin{document}

\title{A decision method for the integrability of differential-algebraic Pfaffian systems \thanks{Partially supported by the following grants: UBACYT 20020110100063 (2012-2015) / Math-AmSud SIMCA ``Implicit Systems, Modelling and Automatic Control" (2013-2014).}}

\author{Lisi D'Alfonso$^\natural$ \and Gabriela Jeronimo$^\sharp$ \and Pablo Solern\'o$^\sharp$\\[4mm]
{\small $\natural$ Departamento de Ciencias Exactas, Ciclo B\'asico Com\'un, Universidad de Buenos Aires,} \\{\small Ciudad Universitaria, 1428, Buenos Aires, Argentina}\\[2mm]
{\small $\sharp$ Departamento de Matem\'atica and IMAS, UBA-CONICET,} \\{\small Facultad de Ciencias Exactas y Naturales,Universidad de Buenos Aires,}\\ {\small Ciudad Universitaria, 1428, Buenos Aires, Argentina}\\[3mm]
{\small E-mail addresses: lisi@cbc.uba.ar, jeronimo@dm.uba.ar, psolerno@dm.uba.ar}
}
\date{}
\maketitle

\begin{abstract}
We prove an effective integrability criterion for differential-algebraic Pfaffian systems leading to a decision method of consistency with a triple exponential complexity bound. As a byproduct, we obtain an upper bound for the order of differentiations in the differential Nullstellensatz for these systems.
\end{abstract}

\emph{Keywords:} Pfaffian Systems; Frobenius Theorem; Integrability of PDAE; Differential Nullstellensatz

2010 MSC: 12H05; 35A01

\section{Introduction}

Let $\vec{x}:=x_1,\ldots,x_m$ and $\vec{y}:=y_1,\ldots,y_n$ be two sets of variables; the first ones represent the independent variables (i.e. those defining the partial derivations) and the second ones are considered as differential unknowns.

The notion of \emph{Pfaffian system} is introduced by J.F. Pfaff in \cite{pfaff} and in its simplest form it is defined as a system of partial differential equations of the type:

\begin{equation}\label{Pfaff general}
\Sigma:=\left\{ \begin{matrix}
\dfrac{\partial y_i}{\partial x_j}=f_{ij}(\vec{x},\vec{y})\end{matrix} \right. \ i=1,\ldots,n,\ j=1,\ldots,m,\end{equation}
 where each $f_{ij}$ is an analytic function around a certain fixed point $(\vec{x}_0,\vec{y}_0)\in \C^m\times \C^n$.

The properties of these systems were extensively studied during the XIXth century by notable mathematicians as Jacobi \cite{jacobi}, Clebsch \cite{clebsch} and Frobenius \cite{frobenius} (see also \cite{hawkins} for a detailed historical approach). One of the main problems considered is the so-called \emph{complete integrability of a Pfaffian system $\Sigma$}: the existence of a neighborhood $U$ of the point $(\vec{x}_0,\vec{y}_0)$ such that for all point $(\widehat{\vec{x}},\widehat{\vec{y}})\in U$ there exists a solution $\gamma$ of $\Sigma$ such that $\gamma(\widehat{\vec{x}})=\widehat{\vec{y}}$. A complete solution of this problem is  known today as the ``Frobenius Theorem" (see \cite{frobenius}): \\

\noindent \textbf{Frobenius Theorem} \emph{Let $\Sigma$ be the Pfaffian system (\ref{Pfaff general}). Then $\Sigma$ is completely integrable in $(\vec{x}_0,\vec{y}_0)$ if and only if for all indices $i,j,k$ with $i=1,\ldots,n$, and $j,k=1,\ldots,m$, the function $D_j(f_{ik})-D_k(f_{ij})$ vanishes in a neighborhood of $(\vec{x}_0,\vec{y}_0)$}.\\

Here, $D_j(h)$ denotes the $j$-th total derivative with respect to $\Sigma$ of any analytic function $h$ in the variables $(\vec{x},\vec{y})$, namely $D_j(h):=\dfrac{\partial h}{\partial x_j}+\sum_i \dfrac{\partial h}{\partial y_i}\, f_{ij}$. Observe that the vanishing of the functions $D_j(f_{ik})-D_k(f_{ij})$ is clearly a necessary condition because of the equality of the mixed derivatives of an analytic function, but the sufficiency is not obvious. \\

We remark that Frobenius Theorem is more general than the statement above because it remains true also for systems of differential linear 1-forms (today called \emph{Pfaffian forms}).
In this sense Frobenius's article may be considered as a main source of inspiration for the transcendental work by Cartan about integrability of exterior differential systems developed in the first half of the XXth century (see \cite{cartan}).

In Cartan's theory two notions play a main r\^ole: the \emph{prolongation} and the \emph{involutivity}  of an exterior differential system. In the particular case of differential equation systems these notions can be easily paraphrased. The prolongation of a system consists simply in applying derivations to it. 
On the other hand, an involutive system is a system with no hidden integrability conditions; in other words, no prolongation is necessary to find new constraints. Roughly speaking, the involutive systems are those to which the classical method of resolution by means of a power series with indeterminate coefficients (known as Frobenius method) can be applied in order to search for a solution or to decide that the system is not integrable.

A main general result related to the integrability in Cartan's theory is the  Cartan-Kuranishi Principle: \emph{Any (generic) exterior differential system can be reduced  to an equivalent involutive system  by a finite number of prolongations and projections.} This result, conjectured by Cartan, is finally   proved by Kuranishi in \cite{kuranishi} (see also \cite{matsuda}). Despite the algebraic precisions given during the '60s and '70s (see for instance \cite{spencer,guillemin,singer}) and more recently in \cite{pommaret} and \cite{seiler}, up to our knowledge, no effective \emph{a priori} upper bounds for the number of required differentiations nor projections are established.\\

In this paper we use arguments close to the Cartan-Kuranishi Principle (namely, prolongations and projections) in order to study the integrability (not necessarily complete) of differential-algebraic Pfaffian systems. More precisely, \emph{a differential-algebraic Pfaffian system} is a system of partial differential equations $\Sigma$ as follows:
\begin{equation} \label{Pfaff algebraic}
\Sigma:=\left\{ \begin{matrix}
\dfrac{\partial \vec{y}}{\partial \vec{x}}=\vec{f}(\vec{x},\vec{y})\cr
\vec{g}(\vec{x},\vec{y})=0\end{matrix} \right.
\end{equation}
where $\vec{f}:=(f_{ij})_{ij}$ and $\vec{g}:=g_1,\ldots ,g_s$ are polynomials in $\C[\vec{x},\vec{y}]$. As before,   $\vec{x}:=(x_1,\ldots,x_m)$ and $\vec{y}:=(y_1,\ldots,y_n)$ denote sets of variables.

By means of prolongations and projections we construct a decreasing chain of algebraic varieties $\C^{m+n}\supseteq\V_0\supseteq \V_1\supseteq\cdots $ which becomes stationary at most at the $(m+n+1)$th step. If we denote by $\V_\infty$ the smallest variety of the chain, we prove the following criterion for integrability (see Theorem \ref{criterio} and the comments that follow it):

 \begin{theorem} \label{thm_non_aut} The system $\Sigma$ is integrable if and only if the algebraic variety $\V_\infty$ is non-empty. The analytic variety of all the regular points of $\V_\infty$ is the biggest analytic variety containing all the graphs of analytic solutions of $\Sigma$.
 \end{theorem}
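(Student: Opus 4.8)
\bigskip
\noindent\textbf{Proof proposal.}
The plan is to extract from the prolongation--projection construction two structural properties of $\V_\infty$ and then run the holomorphic Frobenius theorem on its smooth locus. Throughout, $D_j$ denotes the total derivative operator $D_j(h)=\partial h/\partial x_j+\sum_i(\partial h/\partial y_i)f_{ij}$ acting on $\C[\vec x,\vec y]$, and for $p\in\C^{m+n}$ write $D_j|_p$ for the tangent vector whose $\vec x$-components are $e_j$ (the $j$-th standard vector) and whose $\vec y$-components are $f_{1j}(p),\dots,f_{nj}(p)$; a one-line computation gives $D_j(h)(p)=(dh)_p(D_j|_p)$ for every $h$ and $p$. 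The two properties I would establish from the construction are: \emph{(i)} $D_j(I(\V_\infty))\subseteq I(\V_\infty)$ for all $j$ (because $\V_\infty$ is stationary under the prolongation of the accumulated constraints), and \emph{(ii)} $D_j(f_{ik})-D_k(f_{ij})\in I(\V_\infty)$ for all $i,j,k$ (because, prolonging the Pfaffian equations and eliminating the second-order jets, the equality of the mixed derivatives $\partial^2y_i/\partial x_j\partial x_k$ forces these compatibility conditions already at the first step, and they are then carried into every $\V_r$). From the displayed formula, \emph{(i)} says precisely that the $m$ vectors $D_1|_p,\dots,D_m|_p$ lie in the Zariski tangent space $T_p\V_\infty$ at every $p\in\V_\infty$, and these vectors are linearly independent because their $\vec x$-parts form a basis of $\C^m$.

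\emph{``Only if''.} Let $\gamma\colon U\to\C^n$ be an analytic solution of $\Sigma$, with graph $\Gamma_\gamma$. For any $h\in\C[\vec x,\vec y]$ one has $\frac{d}{dx_j}\big(h(\vec x,\gamma(\vec x))\big)=D_j(h)(\vec x,\gamma(\vec x))$; hence if $h$ vanishes on $\Gamma_\gamma$ so does $D_j(h)$, and moreover $D_j(f_{ik})-D_k(f_{ij})$ vanishes on $\Gamma_\gamma$ since along $\Gamma_\gamma$ it equals $\partial^2\gamma_i/\partial x_j\partial x_k-\partial^2\gamma_i/\partial x_k\partial x_j=0$. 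Starting from $\vec g(\vec x,\gamma(\vec x))=0$, an induction along the construction then yields $\Gamma_\gamma\subseteq\V_r$ for every $r$, so $\Gamma_\gamma\subseteq\V_\infty$ and in particular $\V_\infty\neq\emptyset$.

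\emph{``If''.} Assume $\V_\infty\neq\emptyset$ and fix a regular point $p=(\vec x_0,\vec y_0)$ of $\V_\infty$. By \emph{(i)} the vector fields $D_1,\dots,D_m$ are tangent to the complex manifold $\V_\infty^{\mathrm{reg}}$ and span a rank-$m$ distribution there; by \emph{(ii)} their Lie brackets $[D_j,D_k]=\sum_i\big(D_j(f_{ik})-D_k(f_{ij})\big)\partial_{y_i}$ vanish on $\V_\infty$, so the distribution is involutive. The holomorphic Frobenius theorem then provides, through $p$, an $m$-dimensional integral manifold $N\subseteq\V_\infty^{\mathrm{reg}}$ with $T_qN=\langle D_1|_q,\dots,D_m|_q\rangle$ for all $q\in N$. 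Since the $\vec x$-parts of these vectors form a basis of $\C^m$, the projection $N\to\C^m$ onto the $\vec x$-coordinates is a local biholomorphism, with inverse $\vec x\mapsto(\vec x,\gamma(\vec x))$ for some analytic $\gamma$ with $\gamma(\vec x_0)=\vec y_0$. Comparing, for each $j$, the unique tangent vector of $N$ with $\vec x$-part $e_j$ --- namely $\partial_{x_j}+\sum_i(\partial\gamma_i/\partial x_j)\partial_{y_i}$ versus $D_j|_{(\vec x,\gamma(\vec x))}$ --- gives $\partial\gamma_i/\partial x_j=f_{ij}(\vec x,\gamma(\vec x))$, and $\vec g(\vec x,\gamma(\vec x))=0$ because $N\subseteq\V_\infty\subseteq V(\vec g)$. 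Hence $\gamma$ is a solution of $\Sigma$ through $p$, so $\Sigma$ is integrable.

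\emph{The geometric statement and the main obstacle.} The Frobenius theorem simultaneously exhibits $\V_\infty^{\mathrm{reg}}$ as foliated by the integral manifolds of $\langle D_1,\dots,D_m\rangle$, and by the argument above each leaf is the graph of an analytic solution; thus every regular point of $\V_\infty$ lies on a solution graph, which together with the ``only if'' step identifies $\V_\infty^{\mathrm{reg}}$ with the union of all analytic solution graphs, hence with the largest analytic variety consisting of solution points. The step I expect to be genuinely delicate is the inclusion \emph{(every graph) $\subseteq\V_\infty^{\mathrm{reg}}$}, i.e. that a solution graph never meets the singular locus of $\V_\infty$. I would attack this by analysing the local structure of $\V_\infty$ along a graph $\Gamma_\gamma$: one always has $\dim T_q\V_\infty\ge m$ and $\dim_q\V_\infty\ge\dim_q\Gamma_\gamma=m$, and the point is to upgrade these to $\dim T_q\V_\infty=\dim_q\V_\infty$, showing that near $q$ the variety $\V_\infty$ is a smooth product of the leaf $\Gamma_\gamma$ with a transversal slice parametrising the nearby solutions; uniqueness of solutions through a point, together with the fact that $\V_\infty\setminus\V_\infty^{\mathrm{reg}}$ is a proper subvariety of smaller dimension, should be the tools for this last point.
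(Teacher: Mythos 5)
Your proposal is correct in substance and takes a genuinely different (and arguably cleaner) route from the paper. The paper proves the key step via Lemma \ref{key lemma auto}: at a regular point of $\V_\infty$ it parametrizes the variety locally as a graph $\widehat{\vec y}=\varphi(\overline{\vec y})$ using the implicit function theorem, pushes the Pfaffian system down to an \emph{unconstrained} Pfaffian system $\overline{\Sigma}$ in the $r$ parametrizing variables, verifies the classical Frobenius compatibility conditions for $\overline{\Sigma}$ by explicit matrix manipulations (equations \eqref{Frobenius bar}--\eqref{despeje}), and then lifts the resulting solution of $\overline{\Sigma}$ via $\varphi$. You instead work intrinsically: you observe that stationarity of the chain gives $D_j(I(\V_\infty))\subseteq I(\V_\infty)$ (so the $D_j$ are tangent to $\V_\infty^{\rm reg}$), that the Frobenius ideal $\fF\subseteq\mathfrak{I}_1\subseteq\mathfrak{I}_{p_\infty}$ gives $[D_j,D_k]=\sum_i(D_j(f_{ik})-D_k(f_{ij}))\partial_{y_i}\equiv 0$ on $\V_\infty$ (so the rank-$m$ distribution $\langle D_1,\dots,D_m\rangle$ is involutive), and you then invoke the holomorphic Frobenius theorem for distributions on $\V_\infty^{\rm reg}$. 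Your identity $D_j(h)(p)=(dh)_p(D_j|_p)$ is precisely the coordinate-free version of the paper's Jacobian computations, and your projection-to-$\vec x$ argument for reading a leaf as a solution graph replaces the paper's Claim 2. The paper's parametric version has the advantage that it also proves the more local Lemma \ref{key lemma p} (for a single $p$ where $\V_p$ and $\V_{p+1}$ agree near a regular point), which in turn yields the dimension-drop argument and the bound $p_\infty\le n+1$ in Proposition \ref{component}; your version proves the criterion directly at $p_\infty$ without that intermediate machinery, which is enough for Theorem \ref{thm_non_aut} as stated. One thing you do not address but the paper does is the reduction from the non-autonomous to the autonomous system (Section \ref{sec:nonaut}); in your notation the $\vec x$-components of $D_j$ should be read as the $\vec w$-components after passing to $\Sigma_{\rm aut}$, but this is purely a relabeling. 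Finally, you correctly flag that the second sentence of the theorem hides a delicate point --- whether a solution graph can meet the singular locus of $\V_\infty$ --- and you only sketch an attack on it; it is worth noting that the paper itself does not supply a detailed proof of this either (it is discussed only informally in the Remark after Proposition \ref{component}), so your caution here is well placed rather than a gap relative to the source.
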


Moreover, by means of basic algorithms from commutative algebra, the criterion can be transformed in a decision algorithm which runs within a complexity of order $(n m \sigma d)^{2^{O(n+m)^3}}$, where $d$ is an upper bound for the degrees of the involved polynomials and $\sigma:= \max\{1, s\}$ (see Theorem \ref{thm:decision}).
As a byproduct, we obtain an upper bound of the same order for the order of differentiations in the differential Nullstellensatz for differential-algebraic Pfaffian systems (see Theorem \ref{diffnulls}). In this sense, the present work can be seen as a continuation of \cite{DJS14}.\\

The paper is organized as follows. In Section \ref{sec:criterion}, first, we introduce the notation we use throughout the paper; then, we show how to reduce the integrability problem of general (non-autonomous) differential Pfaffian systems to the autonomous case; finally, we prove our integrability criteria. Section \ref{sec:quantitative} is devoted to analyzing quantitative aspects of the problem: we present an effective decision method for the integrability of differential-algebraic Pfaffian systems and we prove an upper bound for the order in the differential Nullstellensatz for these systems.

\section{A geometrical criterion for the integrability of differential-algebraic Pfaffian systems}\label{sec:criterion}

In this section we exhibit a necessary and sufficient criterion for the integrability of differential-algebraic Pfaffian systems in terms of the dimension of a finite decreasing sequence of algebraic varieties associated to the differential system (see Definition \ref{chainideals} and Theorem \ref{criterio}). These varieties can be constructed explicitly by means of three basic operations: prolongations (i.e. differentiations), linear projections and reductions (i.e. computation of radicals of polynomial ideals).

\subsection{Notations} \label{notation}

Let $m,n\in \mathbb{N}$. We consider two sets of variables $\vec{x}:=x_1,\ldots,x_m$ (the so-called \emph{independent variables}) and $\vec{y}:=y_1,\ldots,y_n$ (the variables  playing the role of \emph{differential unknowns}).
For each pair $(i,j)$, $1\le i\le n$, $1\le j\le m$, let $f_{ij}$ be a polynomial in the ring $\C[\vec{x},\vec{y}]$. Finally, let $\vec{g}:=g_1,\ldots,g_s\in \C[\vec{x},\vec{y}]$ be another finite set of polynomials.

A \emph{differential-algebraic Pfaffian system} is defined as a partial differential system $\Sigma$ of the type:

\begin{equation} \label{general_completo} \Sigma = \left\{ \begin{matrix}
\ \dfrac{\partial y_i}{\partial x_j}= f_{ij}(\vec{x},\vec{y}),& \ 1\le i\le n,\ 1\le j\le m,\cr
g_k(\vec{x}, \vec{y})= 0&\ 1\le k\le s,
\end{matrix} \right. \qquad
\end{equation}
or in its simplified form:
\[
\Sigma = \left\{
\begin{matrix}
\ \dfrac{\partial \vec{y}}{\partial \vec{x}}= \vec{f}(\vec{x},\vec{y}),\cr
\vec{g}(\vec{x},\vec{y})=0\
\end{matrix} \right.
\]
where $\vec{f}$ denotes the set of polynomials $f_{ij}$.
If the polynomials $\vec{f}$ and $\vec{g}$ do not depend on the variables $\vec{x}$, we say that the Pfaffian system $\Sigma$ is   \emph{autonomous}.

For each index $j=1,\ldots,m$ and any polynomial $h\in\C[\vec{x},\vec{y}]$ we define \emph{the total derivative with respect to $x_j$ induced by $\Sigma$} as
\[
D_j(h):=\dfrac{\partial h}{\partial x_j}+\sum_{i=1}^n \dfrac{\partial h}{\partial y_i}f_{ij}.
\]
Observe that $D_j(h)$ belongs to the polynomial ring $\C[\vec{x},\vec{y}]$ and that the operator $D_j$ is a derivation in this ring.

\subsection{Frobenius compatibility conditions}

We are interested in the \emph{integrability} (or \emph{solvability}) of differential-algebraic Pfaffian systems. This notion should be understood as the existence of an \emph{analytic} solution $\gamma$ defined in an open subset of $\C^n$ with target space $\C^m$.

A stronger notion is that of \emph{complete integrability}, which means that there exists a neighborhood $U\subset \C^{m+n}$ around $(\vec{x}_0,\vec{y}_0)$, such that for any $(\widehat{\vec{x}},\widehat{\vec{y}})\in U$ there exists a solution $\gamma$ of $\Sigma$ verifying $\gamma(\widehat{\vec{x}})=\widehat{\vec{y}}$ (note that this notion only makes sense in the case that no algebraic constraints appear, because such a constraint never contains a nonempty open set).
If this is the case for the system $\Sigma$, the classical Frobenius Theorem (see for instance \cite[Ch.X, \S9]{dieudonne}) gives a simple criterion for complete integrability: \emph{$\Sigma$ is completely integrable around $(\vec{x}_0,\vec{y}_0)$ if and only if the compatibility conditions $D_j(f_{i\ell})-D_\ell(f_{ij})\equiv 0$ hold for all triplets $i,j,\ell$ in a suitable neighborhood of $(\vec{x}_0,\vec{y}_0)$.}\\

Our systems are in a certain sense more general than those considered in Frobenius's result (more constraints appear) and we are interested in a weaker notion of integrability. In order to begin our analysis, we introduce the following ideal which will enable us to deal with the Frobenius compatibility conditions and will be involved in the first step of our criterion:

\begin{notation} \label{ideal}
For a given Pfaffian system $\Sigma$ as in (\ref{general_completo}) we denote by $\mathfrak{F}$ the ideal in the polynomial ring $\ring$ generated by the polynomials $D_j(f_{ik})-D_k(f_{ij})$ for all indices $i,j,k$.
\end{notation}

If no algebraic constraints appear in $\Sigma$, the classical Frobenius Theorem in the differential-algebraic setting can be restated as follows:  \emph{$\Sigma$ is  {completely integrable} if and only if the polynomial ideal $\mathfrak{F}$ is zero.}
On the other hand, if such a system $\Sigma$ is integrable, the ideal $\mathfrak{F}$ must be properly contained in $\ring$, since if $\gamma$ is a solution, all polynomial in $\mathfrak{F}$ vanish at the vector $\gamma(\vec{x})$. However, this condition (namely, the properness of $\mathfrak{F}$) is not enough to guarantee simple integrability:\\

\noi \textbf{Example}: Consider the Pfaffian system
\[\dfrac{\partial y}{\partial x_1}=y^2\ ,\ \dfrac{\partial y}{\partial x_2}=y^2+1.\]
In this case, $n= 1$, $m=2$, and no constraint appears. It is obvious that if $\gamma$ is a solution, then it is not a constant function because the polynomials $y^2$ and $y^2+1$ have no common zeros. The ideal $\mathfrak{F}$ is generated by the polynomial $2y(y^2+1)-2yy^2=2y$ and then it is proper. On the other hand, any solution $\gamma$ must verify the equation $2\gamma=0$ and in particular $\gamma$ is a constant, leading to a  contradiction.

\subsection{From non-autonomous to autonomous systems} \label{sec:nonaut}

We begin by showing that the problem of the integrability of a (general) differential Pfaffian system
\[
\Sigma = \left\{
\begin{matrix}
\ \dfrac{\partial \vec{y}}{\partial \vec{x}}= \vec{f}(\vec{x},\vec{y})\cr
\vec{g}(\vec{x},\vec{y})=0\
\end{matrix} \right. \quad
\]
can be reduced to analyzing the integrability of an autonomous Pfaffian system.

To this end, we transform $\Sigma$ into an autonomous Pfaffian system $\Sigma_{\textrm{aut}}$ in the obvious way, by introducing new differential unknowns  $\vec{w}:=w_1,\ldots,w_m$ (as many as independent variables):
\begin{equation}\label{sigma_aut}
\Sigma_{\textrm{aut}} = \left\{
\begin{matrix}
\ \dfrac{\partial \vec{y}}{\partial \vec{x}} = \vec{f}(\vec{w},\vec{y}) \\[3mm]
\ \!\!\!\!\!\!\!\!\!\!\!\!\!\!\!\!\!\!
\ \dfrac{\partial \vec{w}}{\partial \vec{x}} =\delta \\[2mm]
\ \vec{g}(\vec{w},\vec{y})=0\
\end{matrix} \right.
\end{equation}
where $\delta$ denotes the Dirac-symbol (i.e. $\delta_{j\ell}=1$ if  $j=\ell$, and $0$ otherwise). Clearly $\Sigma_{\textrm{aut}}$ is an autonomous system in $m+n$ unknowns and $m$ independent variables; moreover, it is equivalent to $\Sigma$ from the integrability point of view:

\begin{proposition}
The Pfaffian system $\Sigma$ is integrable if and only if the autonomous Pfaffian system $\Sigma_{\textrm{aut}}$ is integrable.
\end{proposition}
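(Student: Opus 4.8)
The plan is to establish the two implications by an explicit conversion of solutions, the only substantive point being that the auxiliary block $\partial \vec{w}/\partial \vec{x} = \delta$ forces $\vec{w}$ to be an affine reparametrization of the independent variables. For the implication $(\Rightarrow)$, suppose $\gamma$ is an analytic solution of $\Sigma$ on an open set $U\subseteq \C^m$, so that $\partial \gamma_i/\partial x_j = f_{ij}(\vec{x},\gamma(\vec{x}))$ and $g_k(\vec{x},\gamma(\vec{x}))=0$ on $U$. I would check that the pair $(\vec{y},\vec{w}):=(\gamma(\vec{x}),\vec{x})$, i.e. taking $\vec{w}$ to be the identity map, solves $\Sigma_{\textrm{aut}}$ on $U$: one has $\partial w_j/\partial x_\ell=\delta_{j\ell}$, and substituting $\vec{w}=\vec{x}$ turns $\vec{f}(\vec{w},\vec{y})$ and $\vec{g}(\vec{w},\vec{y})$ into $\vec{f}(\vec{x},\gamma(\vec{x}))$ and $\vec{g}(\vec{x},\gamma(\vec{x}))$, so the remaining equations hold and $\Sigma_{\textrm{aut}}$ is integrable.

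For the converse $(\Leftarrow)$, suppose $(\eta,\omega)$ is an analytic solution of $\Sigma_{\textrm{aut}}$ on an open set $U\subseteq\C^m$, which we may assume connected. The equations $\partial \omega_j/\partial x_\ell=\delta_{j\ell}$ say that each $\omega_j$ has constant partial derivatives, hence $\omega_j(\vec{x})=x_j+c_j$ for some constant vector $\vec{c}=(c_1,\dots,c_m)\in\C^m$. I would then define $\gamma(\vec{x}):=\eta(\vec{x}-\vec{c})$, which is analytic on $U+\vec{c}$, and verify by the chain rule, using $\partial\eta_i/\partial x_j(\vec{x})=f_{ij}(\omega(\vec{x}),\eta(\vec{x}))=f_{ij}(\vec{x}+\vec{c},\eta(\vec{x}))$, that $\partial\gamma_i/\partial x_j(\vec{x})=f_{ij}(\vec{x},\gamma(\vec{x}))$, and likewise $g_k(\vec{x},\gamma(\vec{x}))=g_k(\vec{x}+\vec{c},\eta(\vec{x}-\vec{c}))=0$; thus $\gamma$ solves $\Sigma$.

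I do not expect any real obstacle: the statement is a normalization lemma and the conversions above are essentially forced. The two points deserving a line of justification are that $\partial\vec{w}/\partial\vec{x}=\delta$ pins down $\vec{w}$ up to an additive constant (for which one restricts to a connected domain), and the bookkeeping of domains, which is precisely why the translation by $\vec{c}$ is introduced — without it one would recover only a solution of $\Sigma$ with $\vec{x}$ shifted by $\vec{c}$.
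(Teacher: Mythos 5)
Your proof is correct and follows essentially the same route as the paper: in one direction you take $\vec{w}=\vec{x}$, and in the other you observe that $\partial\vec{w}/\partial\vec{x}=\delta$ on a connected domain forces $\vec{w}(\vec{x})=\vec{x}+\vec{c}$, then translate the domain by $\vec{c}$ to recover a solution of $\Sigma$. The details (chain rule, substitution into $\vec{f}$ and $\vec{g}$, domain bookkeeping) match the paper's argument exactly.
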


\begin{proof} Let $\gamma:\mathcal U\subset \C^m\to \C^n$ be a solution of $\Sigma$; then  $\psi:\mathcal U\to \C^{m+n}$ defined as $\psi(\vec{x}):=(\vec{x},\gamma(\vec{x}))$ is a solution of $\Sigma_{\textrm{aut}}$. Reciprocally, let $\psi:\mathcal{V}\to \C^{m+n}$ be a solution of $\Sigma_{\textrm{aut}}$ in a connected neighborhood of a certain $\vec{x}_1\in\C^m$. From the second part of the defining equations of $\Sigma_{\textrm{aut}}$ one deduces that $\psi(\vec{x})=(\vec{x}+\lambda,\xi(\vec{x}))$ for a suitable vector $\lambda\in \C^m$ and an analytic function $\xi: \mathcal{V}\to \C^n$. Take $\vec{x}_0:=\vec{x}_1+\lambda$ and $\mathcal{U}:=\mathcal{V}+\lambda$. Define $\gamma(\vec{x}):=\xi(\vec{x}-\lambda)$. Thus $\gamma:\mathcal{U}\to \C^n$ verifies:
\[ \dfrac{\partial \gamma_i(\vec{x})}{\partial x_j}= \dfrac{\partial \psi_{m+i}(\vec{x}-\lambda)}{\partial x_j}=f_{ij}(\psi(\vec{x}-\lambda))=f_{ij}((\vec{x}-\lambda)+\lambda,\xi(\vec{x}-\lambda))=
f_{ij}(\vec{x},\gamma(\vec{x}))\]
and
\[
\vec{g}(\vec{x},\gamma(\vec{x}))=\vec{g}((\vec{x}-\lambda)+\lambda,
\xi(\vec{x}-\lambda))=\vec{g}(\psi(\vec{x}-\lambda))=0.
\]
In other words, $\gamma$ is a solution of $\Sigma$.
\end{proof}

\subsection{Prolongation chains}

Taking into account the result of the previous section, for simplicity, we restrict now our attention to considering the \emph{autonomous case},
namely, differential-algebraic Pfaffian systems of the form
\begin{equation}\label{autsyst}
\Sigma = \left\{
\begin{matrix}
\ \dfrac{\partial \vec{y}}{\partial \vec{x}}= \vec{f}(\vec{y}),\cr
\vec{g}(\vec{y})=0\
\end{matrix} \right.
\end{equation}
where $\vec{f}$ and $\vec{g}$ are finite sets of polynomials in $\C[\vec{y}]$.
In this setting, for a polynomial $h\in\C[\vec{y}]$,
 the total derivative of $h$  with respect to $x_j$ induced by $\Sigma$ is
\[
D_j(h)=\sum_{i=1}^n \dfrac{\partial h}{\partial y_i}f_{ij}\in \C[\vec{y}].
\]

First, we introduce the so-called \emph{prolongation} of an ideal in $\C[\vec{y}]$:

\begin{definition}\label{rulodef}
Let $I\subset\C[\vec{y}]$ be an arbitrary ideal and $\vec{F}$ be a finite system of generators of $I$. We denote by $\widetilde{I}$ the ideal of the polynomial ring $\C[\vec{y}]$ generated by $I$ and the polynomials $D_j(f)$ for all $f\in \vec{F}$ and $j=1,\ldots,m$.
\end{definition}

\begin{proposition} \label{rulo}
The ideal $\widetilde{I}$ is independent of the system of generators $\vec{F}$ of $I$.
\end{proposition}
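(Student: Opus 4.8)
The plan is to show that if $\vec{F}$ and $\vec{F}'$ are two finite systems of generators of the same ideal $I\subset\C[\vec{y}]$, then the ideal generated by $I\cup\{D_j(f):f\in\vec{F},\,1\le j\le m\}$ coincides with the ideal generated by $I\cup\{D_j(f'):f'\in\vec{F}',\,1\le j\le m\}$. By symmetry it suffices to prove one inclusion, say that every $D_j(f')$ with $f'\in\vec{F}'$ lies in the ideal $\widetilde{I}_{\vec{F}}$ generated by $I$ and the $D_j(f)$ with $f\in\vec{F}$. Fix such an $f'$ and an index $j$.

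The key step is to use that $f'\in I$, so we may write $f'=\sum_{f\in\vec{F}} a_f\, f$ for suitable coefficients $a_f\in\C[\vec{y}]$, and then apply the Leibniz rule, using that $D_j$ is a derivation of $\C[\vec{y}]$ (as observed right after its definition in Subsection \ref{notation}):
\[
D_j(f')=\sum_{f\in\vec{F}} D_j(a_f)\, f \;+\; \sum_{f\in\vec{F}} a_f\, D_j(f).
\]
In the right-hand side, each term $D_j(a_f)\,f$ belongs to $I\subseteq\widetilde{I}_{\vec{F}}$ since $f\in I$, and each term $a_f\,D_j(f)$ belongs to $\widetilde{I}_{\vec{F}}$ by the very definition of $\widetilde{I}_{\vec{F}}$. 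Hence $D_j(f')\in\widetilde{I}_{\vec{F}}$. Since $I\subseteq\widetilde{I}_{\vec{F}}$ trivially, this shows $\widetilde{I}_{\vec{F}'}\subseteq\widetilde{I}_{\vec{F}}$, and by the symmetric argument the reverse inclusion holds as well, so the two prolongations coincide.

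There is no real obstacle here: the only point to be careful about is that the definition of $\widetilde{I}$ in Definition \ref{rulodef} only adds the derivatives $D_j(f)$ of the chosen generators $f\in\vec{F}$, not of arbitrary elements of $I$; the Leibniz computation above is exactly what bridges this gap, showing that the derivative of an arbitrary element of $I$ is automatically captured. One should also note that the argument works verbatim for each of the finitely many derivations $D_1,\dots,D_m$ independently, so no interaction between different indices $j$ needs to be considered.
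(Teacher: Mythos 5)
Your proof is correct and follows essentially the same approach as the paper: write a generator of $I$ from the second family as a $\C[\vec{y}]$-linear combination of the first family, apply the Leibniz rule for the derivation $D_j$, and observe that one summand lands in $I$ and the other in the ideal generated by the $D_j(f)$. The explicit appeal to symmetry for the reverse inclusion is the only cosmetic difference.
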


\begin{proof}
 Let $\vec{G}$ be another system of generators of $I$. It suffices to see that for any   $k=1,\ldots ,m$ and any $g\in \vec{G}$ the polynomial $D_k(g)$ belongs to the ideal generated by $I$ and the polynomials $D_j(f)$. Writing $g=\displaystyle{\sum_{f\in \vec{F}} p_f\, f}$ and using that $D_j$ is a derivation in $\C[\vec{y}]$, we deduce:
\[ D_j(g)=\sum_{f\in \vec{F}} D_j(p_f\, f)=\sum_{f\in \vec{F}} D_j(p_f)\cdot f\ +\ \sum_{f\in \vec{F}} p_f\cdot D_j(f).\]
Since the first term belongs to $(\vec{F})=I$ and the second one to the ideal generated by the polynomials $D_j(f)$, the proposition follows.
\end{proof}\\

We start proving an elementary test to check integrability by means of a prolongation and a projection, which will be the key result to our criteria.

\begin{lemma} \label{key lemma auto}
Let $\Sigma$ be an autonomous differential-algebraic Pfaffian system:
\[
\Sigma = \left\{
\begin{matrix}
\ \dfrac{\partial \vec{y}}{\partial \vec{x}}= \vec{f}(\vec{y}),\cr
\vec{g}(\vec{y})=0\
\end{matrix} \right.
\]
where $\vec{f}$ and $\vec{g}$ are finite sets of polynomials in $\C[\vec{y}]$. Let $\mathcal{I}:=\sqrt{(\vec{g})}$ and $\mathcal{J}:=\sqrt{\mathfrak{F}+\widetilde{\mathcal{I}}}$, where  $\mathfrak{F}\subset \C[\vec{y}]$ is the ideal generated by the Frobenius conditions, as in Notation \ref{ideal}, and let  $\V\supseteq\mathbb{W}$ be the varieties defined by $\mathcal{I}$ and $\mathcal{J}$ respectively. Suppose that $\V$ and $\W$ are the same nonempty variety in a neighborhood of a common regular point $Q\in\C^n$.
Then $\Sigma$ is integrable. Moreover, there exists a solution of $\Sigma$ passing through the point $Q$ and contained in $\W$.
\end{lemma}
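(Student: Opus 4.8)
The plan is to reduce the statement to a direct application of the classical Frobenius Theorem, but applied on the smooth submanifold $\W$ rather than on an open subset of $\C^n$. First I would work locally near the regular point $Q$: since $\V$ and $\W$ coincide near $Q$ and $Q$ is a regular (smooth) point of this common variety, there is a neighborhood $U$ of $Q$ in $\C^n$ such that $\W\cap U$ is a complex submanifold of some dimension $r$, cut out by $n-r$ of the generators of $\mathcal I$ whose differentials are independent at $Q$. The key point is that the total derivative operators $D_j$ are, geometrically, the vector fields $\sum_i f_{ij}(\vec y)\,\partial/\partial y_i$ on $\C^n$, and I would first check that each $D_j$ is tangent to $\W$ along $\W\cap U$. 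This is exactly what the inclusion $\widetilde{\mathcal I}\subseteq \mathcal J$ and the hypothesis $\mathcal J=\mathcal I$ near $Q$ give us: for every generator $g$ of the radical ideal $\mathcal I$ and every $j$, the polynomial $D_j(g)$ lies in $\mathcal J$, hence vanishes on $\W\cap U = \V\cap U$; so $D_j(g)\equiv 0$ on the submanifold, which is precisely the statement that $D_j$ is tangent to it.

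Next I would restrict the whole system to $\W\cap U$. Choose local analytic coordinates adapted to the submanifold, or more concretely pick among $\vec y$ a subset of $r$ coordinates, say $y_{i_1},\ldots,y_{i_r}$, that form a local coordinate system on $\W\cap U$ near $Q$, so that the remaining coordinates are analytic functions of these on the submanifold. Since each $D_j$ is tangent to $\W$, pushing the vector fields $D_j$ forward to these $r$ coordinates yields a Pfaffian system (without algebraic constraints) of the form $\partial y_{i_a}/\partial x_j = \widehat f_{a j}$ on an open set of $\C^m\times\C^r$, whose solutions lift back to solutions of $\Sigma$ contained in $\W$. It remains to verify that this restricted Pfaffian system satisfies the Frobenius compatibility conditions $D_j D_k - D_k D_j \equiv 0$ on $\W\cap U$. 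But the Frobenius obstructions for $\Sigma$ generate the ideal $\mathfrak F$, and $\mathfrak F\subseteq \mathcal J$, so by the hypothesis $\mathcal J=\mathcal I$ near $Q$ every generator of $\mathfrak F$ vanishes on $\W\cap U$; since tangency of the $D_j$'s guarantees that the bracket $[D_j,D_k]$ is again tangent and its components are exactly (restrictions of) the Frobenius polynomials, the compatibility conditions hold identically on the submanifold.

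Having done this, the classical Frobenius Theorem (in the form quoted in the excerpt) applies to the restricted, constraint-free, completely integrable Pfaffian system on $\W\cap U$: through the point $Q$ (more precisely, through its image in the $r$ chosen coordinates) there passes an analytic solution $\gamma$ defined on a neighborhood of a point in $\C^m$. Lifting $\gamma$ back through the coordinate description of $\W\cap U$ produces an analytic map into $\C^n$ which by construction satisfies $\partial\vec y/\partial\vec x = \vec f(\vec y)$, has image inside $\W$, hence also satisfies $\vec g(\vec y)=0$, and passes through $Q$. This is the desired solution, and it proves that $\Sigma$ is integrable.

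The main obstacle I anticipate is the bookkeeping in the restriction step: one must be careful that the chosen coordinate subset really does give a local chart on the smooth locus near $Q$, that the pushed-forward vector fields $D_j$ are well defined and analytic there, and — most delicately — that the Frobenius brackets of the \emph{restricted} system are genuinely the restrictions of the polynomials $D_j(f_{ik})-D_k(f_{ij})$ generating $\mathfrak F$, rather than merely related to them modulo tangency corrections. Once the tangency of the $D_j$ to $\W$ is established cleanly, this identification is essentially the Leibniz computation already used in Proposition \ref{rulo}, but it has to be carried out in the adapted coordinates with some care.
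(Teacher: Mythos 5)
Your proposal is correct and takes essentially the same approach as the paper: parametrize the common variety locally near the regular point $Q$, restrict the Pfaffian system to the chart coordinates, verify Frobenius compatibility of the restricted system using the tangency of the $D_j$ to $\W$ (coming from $\widetilde{\mathcal{I}}\subseteq\mathcal{J}=\mathcal{I}$ locally) together with the vanishing of $\mathfrak{F}$ on $\W$, and lift the resulting solution back to $\Sigma$. The only stylistic difference is that you phrase the compatibility check geometrically via the bracket of tangent vector fields, whereas the paper carries out the same content as an explicit chain-rule matrix identity culminating in $\dfrac{\partial\varphi}{\partial\overline{\vec{y}}}\cdot(f_{1j},\ldots,f_{rj})^{T}=(f_{(r+1)j},\ldots,f_{nj})^{T}$.
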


\begin{proof}
From the hypotheses, the ideals $\mathcal{I}$ and $\mathcal{J}$ define the same nonempty algebraic variety $C$ locally around $Q$.
Let $\vec{g}_0$ be a finite system of generators of $\mathcal{I}$. Since $Q$ is a regular point of $C$, we can apply the Theorem of Implicit Functions to the system $\vec{g}_0=0$ and so, without loss of generality (reordering the variables if necessary) we may suppose that there exists an integer $r>0$
such that, if $\overline{\vec{y}}:=y_1,\ldots,y_r$ and $\widehat{\vec{y}}:=y_{r+1},\ldots,y_n$, the variety $C$ is locally defined around $Q=(\overline{\vec{y}}_0,\widehat{\vec{y}}_0)$ as the graph of an infinite differentiable function $\varphi$ defined in a neighborhood $\mathcal{U}\subset \C^r$ of $\overline{\vec{y}}_0$ with target space $\C^{n-r}$. The integer $r$ is the dimension of $C$, the rank of the Jacobian matrix $\dfrac{\partial \vec{g}_0}{\partial \vec{y}}$ in a neighborhood of $Q$ in $C$ is constant and equal to $n-r$, and moreover, the submatrix $\dfrac{\partial \vec{g}_0}{\partial \widehat{\vec{y}}}$ has locally maximal rank $n-r$.

\noi Consider the following partial differential (not necessarily algebraic) Pfaffian system $\overline{\Sigma}$ induced by $\Sigma$ and $\varphi$:
\[
\overline{\Sigma}:=\left\{
\begin{matrix}
\ \dfrac{\partial \overline{\vec{y}}}{\partial \vec{x}}= \overline{\vec{f}}(\vec{\overline{\vec{y}},\varphi(\overline{\vec{y}})})
\end{matrix} \right. ,
\]
where $\overline{\vec{f}}:=(f_{ij})$ with $i=1,\ldots,r$, $j=1,\ldots,m$.

\noi The proof of the lemma will be achieved by showing the  following facts concerning the system $\overline{\Sigma}$:

\begin{itemize}
\item{\textbf{Claim 1.}} \emph{The system $\overline{\Sigma}$ is completely integrable in a neighborhood of $\overline{\vec{y}_0}$.}

\item{\textbf{Claim 2.}} \emph{If $\mu$ is a solution of $\overline{\Sigma}$ around $\overline{\vec{y}_0}$, then $\gamma:=(\mu,\varphi\circ \mu)$ is a solution of $\Sigma$.}
\end{itemize}

\noi \emph{Proof of Claim 1.} From Frobenius Theorem it suffices to prove that for each pair $j,k$, $1\le j,k\le m$, and all $i=1,\ldots,r$, the relation
\[
\sum_{h=1}^r \left(\dfrac{\partial {f}_{ij}(\overline{\vec{y}}, \varphi(\overline{\vec{y}}))}{\partial y_h}\ {f}_{hk}-\dfrac{\partial {f}_{ik}(\overline{\vec{y}}, \varphi(\overline{\vec{y}}))}{\partial y_h}\ {f}_{hj}\right)\bigg|_{(\overline{\vec{y}}, \varphi(\overline{\vec{y}}))}=0
\]
holds for all $\overline{\vec{y}}\in \mathcal{U}$ (where $\mathcal{U}$ a suitable neighborhood of $\overline{\vec{y}}_0$).

\noi From the Chain Rule, for any $q=1,\ldots,m$, and $i,h=1,\ldots,r$,  we have the equality
\[\dfrac{\partial {f}_{iq}(\overline{\vec{y}}, \varphi(\overline{\vec{y}}))}{\partial y_h}=\dfrac{\partial {f}_{iq}}{\partial y_h}\bigg|_{(\overline{\vec{y}}, \varphi(\overline{\vec{y}}))}+\sum_{\ell=r+1}^n  \dfrac{\partial {f}_{iq}}{\partial y_\ell}\bigg|_{(\overline{\vec{y}}, \varphi(\overline{\vec{y}}))} \dfrac{\partial \varphi_\ell}{\partial y_h}(\overline{\vec{y}})
\ ,\]
where $\varphi_{r+1},\ldots,\varphi_n$ are the coordinates of the function $\varphi$.

\noi These equalities can be collected in a unique relation in terms of a product of matrices evaluated in $(\overline{\vec{y}}, \varphi(\overline{\vec{y}}))$ which describes the Frobenius compatibility conditions for the system $\overline{\Sigma}$ as follows:

\begin{equation} \label{Frobenius bar}
\left(\dfrac{\partial {f}_{ij}}{\partial \overline{\vec{y}}} + \dfrac{\partial {f}_{ij}}{\partial \widehat{\vec{y}}}\cdot \dfrac{\partial\varphi}{\partial \overline{\vec{y}}}\right)\cdot\left(
               \begin{array}{c}
                 f_{1k} \\
                 \vdots \\
                 f_{rk} \\
               \end{array}
             \right)= \left(\dfrac{\partial {f}_{ik}}{\partial \overline{\vec{y}}} + \dfrac{\partial {f}_{ik}}{\partial \widehat{\vec{y}}}\cdot \dfrac{\partial\varphi}{\partial \overline{\vec{y}}}\right)\cdot\left(
               \begin{array}{c}
                 f_{1j} \\
                 \vdots \\
                 f_{rj} \\
               \end{array}
             \right)
\end{equation}
where  $\dfrac{\partial\varphi}{\partial \overline{\vec{y}}}$ denotes the $(n-r)\times r$ Jacobian matrix of the map $\varphi$ with respect to the variables $\overline{\vec{y}}$.

\noi On the other hand, differentiating the identity $\vec{g}_0(\overline{\vec{y}},\varphi(\overline{\vec{y}}))\equiv 0$ and multiplying by the column vector $(f_{ij})_{1\le i \le r}$, we infer that, for all $j=1,\dots, m$,

\begin{equation} \label{equality1}
\dfrac{\partial \vec{g}_0}{\partial \overline{\vec{y}}}\cdot \left(
               \begin{array}{c}
                 f_{1j} \\
                 \vdots \\
                 f_{rj} \\
               \end{array}
             \right)\ +\ \dfrac{\partial \vec{g}_0}{\partial \widehat{\vec{y}}}\cdot \dfrac{\partial\varphi}{\partial \overline{\vec{y}}}\cdot \left(\begin{array}{c}
                 f_{1j} \\
                 \vdots \\
                 f_{rj} \\
               \end{array}\right)=0
\end{equation}
holds at all the points $(\overline{\vec{y}},\varphi(\overline{\vec{y}}))$.

\noi Finally, from the hypothesis $C$ coincides with $\V$ and $\W$ in a neighborhood of the point $Q$, we may suppose (shrinking the neighborhood $\mathcal{U}$ of $\overline{\vec{y}}_0$ if necessary) that $h(\overline{\vec{y}},\varphi(\overline{\vec{y}}))=0$ for all $h\in \mathcal{J}$, in particular since $D_j(g)\in \mathcal{J}$ for all $j=1,\ldots,m$ and all $g\in \vec{g}_0$ we have
\[
\dfrac{\partial \vec{g}_0}{\partial \vec{y}}\cdot \left(
                                               \begin{array}{c}
                                                 f_{1j} \\
                                                 \vdots \\
                                                 f_{nj} \\
                                               \end{array}
                                             \right)\ =   \ 0
\]
in any point $(\overline{\vec{y}},\varphi(\overline{\vec{y}}))$, for all index $j$. Again, separating the variables $\vec{y}$ as before, this equality gives
\begin{equation} \label{equality2}
\dfrac{\partial \vec{g}_0}{\partial \overline{\vec{y}}}\cdot \left(
                                               \begin{array}{c}
                                                 f_{1j} \\
                                                 \vdots \\
                                                 f_{rj} \\
                                               \end{array}
                                             \right)\ +
\dfrac{\partial \vec{g}_0}{\partial \widehat{\vec{y}}}\cdot \left(
                                               \begin{array}{c}
                                                 f_{(r+1)j} \\
                                                 \vdots \\
                                                 f_{nj} \\
                                               \end{array}
                                             \right)\ =   \ 0.
\end{equation}

\noi Comparing equalities (\ref{equality1}) and (\ref{equality2}) and recalling that the matrix $\dfrac{\partial \vec{g}_0}{\partial \widehat{\vec{y}}}$ has full rank $n-r$ we infer that
\begin{equation} \label{despeje}
\dfrac{\partial\varphi}{\partial \overline{\vec{y}}}\cdot \left(\begin{array}{c}
                 f_{1j} \\
                 \vdots \\
                 f_{rj} \\
               \end{array}\right)=\left(
                                               \begin{array}{c}
                                                 f_{(r+1)j} \\
                                                 \vdots \\
                                                 f_{nj} \\
                                               \end{array}
                                             \right)
\end{equation}
in any point $(\overline{\vec{y}},\varphi(\overline{\vec{y}}))$ and for all index $j$.

In order to finish the proof of Claim 1 it suffices to prove the validity of relation (\ref{Frobenius bar}). Since, by definition, the Frobenius conditions associated to the system $\Sigma$ are contained in the ideal $\mathcal{J}$, we have that
\[
\dfrac{\partial f_{ij}}{\partial \vec{y}}\cdot \left(
                                               \begin{array}{c}
                                                 f_{1k} \\
                                                 \vdots \\
                                                 f_{nk} \\
                                               \end{array}
                                             \right)=\dfrac{\partial f_{ik}}{\partial \vec{y}}\cdot \left(
                                               \begin{array}{c}
                                                 f_{1j} \\
                                                 \vdots \\
                                                 f_{nj} \\
                                               \end{array}
\right)\]
holds in a neighborhood of $Q$ relative to the variety $C$. Splitting the variables $\vec{y}$  into $\overline{\vec{y}}$ and $\widehat{\vec{y}}$ as before, this relation can be written
\[
\dfrac{\partial f_{ij}}{\partial \overline{\vec{y}}}\cdot \left( \begin{array}{c}
                                                 f_{1k} \\
                                                 \vdots \\
                                                 f_{rk} \\
                                               \end{array}\right)
+ \dfrac{\partial f_{ij}}{\partial \widehat {\vec{y}}}\cdot \left(
                                               \begin{array}{c}
                                                 f_{(r+1)k} \\
                                                 \vdots \\
                                                 f_{nk} \\
                                               \end{array}\right)
                                               =
\dfrac{\partial f_{ik}}{\partial \overline{\vec{y}}}\cdot
\left( \begin{array}{c}
                                                 f_{1j} \\
                                                 \vdots \\
                                                 f_{rj} \\
                                               \end{array}\right)
+ \dfrac{\partial f_{ik}}{\partial \widehat {\vec{y}}}\cdot \left(
                                               \begin{array}{c}
                                                 f_{(r+1)j} \\
                                                 \vdots \\
                                                 f_{nj} \\
                                               \end{array}\right)
\]
Identity (\ref{Frobenius bar}) is obtained from this formula simply by  replacing with relation (\ref{despeje}) for the indices $j$ and $k$. This finishes the proof of Claim 1.

\noi \emph{Proof of Claim 2.} Suppose that $\mu$ is a solution of $\overline{\Sigma}$ around $\overline{\vec{y}}_0$. Clearly the image of $\gamma:=(\mu, \varphi\circ\mu)$ is contained in $C$ because it is included in the the graph of $\varphi$. In particular $\gamma$ verifies the algebraic constraint $\vec{g}=0$.

\noi Decompose $\gamma=(\overline{\gamma},\widehat{\gamma})$ where $\overline{\gamma}:=\mu$ and $\widehat{\gamma}:=\varphi\circ\mu$. For all $j=1,\ldots ,m$ we have the relations
\[\dfrac{\partial \overline{\gamma}}{\partial x_j}=\dfrac{\partial \mu}{\partial x_j}=\left(
                                               \begin{array}{c}
                                                 f_{1j}(\mu, \varphi\circ\mu) \\
                                                 \vdots \\
                                                 f_{rj}(\mu, \varphi\circ\mu) \\
                                               \end{array}\right)
=\left(\begin{array}{c}
f_{1j}(\gamma) \\
                                                 \vdots \\
                                                 f_{rj}(\gamma) \\
                                               \end{array}\right)
\]
and
\[
\dfrac{\partial \widehat{\gamma}}{\partial x_j}=\dfrac{\partial (\varphi\circ \mu)}{\partial x_j}=\dfrac{\partial \varphi}{\partial \overline{\vec{y}}}\cdot \dfrac{\partial \mu}{\partial x_j}=\dfrac{\partial \varphi}{\partial \overline{\vec{y}}}\cdot \left(
                                               \begin{array}{c}
                                                 f_{1j}(\mu, \varphi\circ\mu) \\
                                                 \vdots \\
                                                 f_{rj}(\mu, \varphi\circ\mu) \\
                                               \end{array}\right)=
\left(
                                               \begin{array}{c}
                                                 f_{(r+1)j}(\gamma) \\
                                                 \vdots \\
                                                 f_{nj}(\gamma) \\
                                               \end{array}\right)
\]
where the last equality follows from identity (\ref{despeje}). This completes the proof of Claim 2 and therefore also of the lemma. \end{proof}\\

\subsection{An integrability criterion}\label{sec:integrability}

We introduce an increasing chain of radical polynomial ideals in $\C[\vec{y}]$ associated to a Pfaffian system as in (\ref{autsyst}) (or its dual counterpart: a descending chain of algebraic varieties in $\C^{m}$) which allows us to give a geometrical criterion for the solvability of the system.

\begin{definition} \label{chainideals}
Let $(\mathfrak{I}_p)_{p\in\N_0}$ be the sequence of radical polynomial ideals in  $\C[\vec{y}]$ defined recursively as follows:
\begin{itemize}
\item $\mathfrak{I}_0:=\sqrt{(\vec{g})}$, where the equations $\vec{g}=0$ define the algebraic constraint of $\Sigma$.
\item $\mathfrak{I}_1:=\sqrt{\mathfrak{F}+\widetilde{\mathfrak{I}_0}}$, where  $\mathfrak{F}\subset \C[\vec{y}]$ is the ideal generated by the Frobenius conditions as in Notation \ref{ideal}.
\item For $p\ge 1$, $\mathfrak{I}_{p+1}:=\sqrt{\widetilde{\mathfrak{I}_p}}$.
\end{itemize}
Here, for $p\ge 0$, $\widetilde{\mathfrak{I}_p}\subset\C[\vec{y}]$ denotes the prolongation ideal introduced in Definition \ref{rulodef}.

\noi By duality, we define for each $p\ge 0$ the set $\V_p\subset \C^n$ as the set of zeros of the ideal $\mathfrak{I}_p$ in the affine space $\C^{n}$.

We obtain an ascending chain of polynomial ideals $\mathfrak{I}_0\subseteq \cdots \subseteq \mathfrak{I}_p \subseteq \mathfrak{I}_{p+1} \subseteq \cdots$ and the corresponding descending chain of algebraic varieties $\V_0\supseteq\cdots \supseteq\V_p\supseteq \V_{p+1}\supseteq\cdots$.

Each step $\mathfrak{I}_p\rightsquigarrow\mathfrak{I}_{p+1}$ or $\V_p\rightsquigarrow \V_{p+1}$ is called a \emph{prolongation}.
\end{definition}

From Proposition \ref{rulo}, both chains $(\mathfrak{I}_p)_{p\in\N_0}$ and  $(\V_p)_{p\in\N_0}$ depend only on the Pfaffian system $\Sigma$ and, by Noetherianity, there exists a minimum integer $p_\infty$ where the prolongations become stationary. Moreover, from the definitions of the chains we observe that if an integer $p$ satisfies $\mathfrak{I}_p=\mathfrak{I}_{p+1}$ (resp. $\V_p=\V_{p+1}$) therefore the chain becomes stationary and hence $p\ge p_\infty$. In other words, $p_\infty$ is the smallest integer $p\ge 0$ such that $\mathfrak{I}_p=\mathfrak{I}_{p+1}$ (resp. $\V_p=\V_{p+1}$) holds. \\

This section is devoted to state integrability criteria for autonomous differential-algebraic Pfaffian systems  in terms of the prolongation varieties $\V_p$ introduced above.\\

As a consequence of Lemma \ref{key lemma auto} we deduce:

\begin{lemma} \label{key lemma p}
Let $\Sigma$ be an autonomous differential-algebraic Pfaffian system:
\[
\Sigma = \left\{
\begin{matrix}
\ \dfrac{\partial \vec{y}}{\partial \vec{x}}= \vec{f}(\vec{y}),\cr
\vec{g}(\vec{y})=0\
\end{matrix} \right.
\]
where $\vec{f}$ and $\vec{g}$ are finite sets of polynomials in $\C[\vec{y}]$. Let $p\in\N_0$ be such that $\V_p$ and $\V_{p+1}$ are the same nonempty variety in a neighborhood of a common regular point $Q\in\C^n$.
Then $\Sigma$ is integrable. Moreover, there exists a solution of $\Sigma$ passing through the point $Q$ and contained in $\V_{p+1}$.
\end{lemma}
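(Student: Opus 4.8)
The plan is to derive this from Lemma \ref{key lemma auto} by a short reduction: I would apply that lemma not to $\Sigma$ itself but to an auxiliary autonomous Pfaffian system whose algebraic constraint already encodes the ideal $\mathfrak{I}_p$. The case $p=0$ is immediate and requires no new system: by Definition \ref{chainideals}, $\mathfrak{I}_0=\sqrt{(\vec{g})}$ and $\mathfrak{I}_1=\sqrt{\mathfrak{F}+\widetilde{\mathfrak{I}_0}}$ are exactly the ideals $\mathcal{I}$ and $\mathcal{J}$ of Lemma \ref{key lemma auto}, so $\V_0,\V_1$ are the varieties $\V,\mathbb{W}$ there, and the hypothesis of the present lemma is verbatim the hypothesis of Lemma \ref{key lemma auto}; the conclusion (a solution through $Q$ contained in $\V_1$) follows directly.

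For $p\ge 1$ I would fix a finite system of generators $\vec{g}_p$ of $\mathfrak{I}_p$ and consider the autonomous system
\[
\Sigma_p = \left\{ \begin{matrix} \dfrac{\partial \vec{y}}{\partial \vec{x}} = \vec{f}(\vec{y}), \cr \vec{g}_p(\vec{y}) = 0 \end{matrix}\right.
\]
The crucial observation is that $\Sigma_p$ has the same right-hand sides $\vec{f}$ as $\Sigma$, so the total derivative operators $D_j$ it induces coincide with those of $\Sigma$; hence the Frobenius ideal attached to $\Sigma_p$ is the same ideal $\mathfrak{F}$, and the prolongation operator $\widetilde{(\cdot)}$ is the same (Proposition \ref{rulo} guarantees independence of the chosen generators). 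Applying Lemma \ref{key lemma auto} to $\Sigma_p$, its ideals are $\mathcal{I}=\sqrt{(\vec{g}_p)}=\mathfrak{I}_p$ (already radical) and $\mathcal{J}=\sqrt{\mathfrak{F}+\widetilde{\mathfrak{I}_p}}$. Here one checks that $\mathcal{J}=\mathfrak{I}_{p+1}$: since $\mathfrak{F}\subseteq\mathfrak{I}_1\subseteq\mathfrak{I}_p\subseteq\widetilde{\mathfrak{I}_p}$ for $p\ge 1$, we get $\mathfrak{F}+\widetilde{\mathfrak{I}_p}=\widetilde{\mathfrak{I}_p}$, so $\mathcal{J}=\sqrt{\widetilde{\mathfrak{I}_p}}=\mathfrak{I}_{p+1}$ by the recursive definition. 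Therefore the varieties $\V\supseteq\mathbb{W}$ of Lemma \ref{key lemma auto} for $\Sigma_p$ are exactly $\V_p\supseteq\V_{p+1}$, and the hypothesis of the present lemma is precisely the hypothesis of Lemma \ref{key lemma auto} for $\Sigma_p$.

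It then remains to transfer the conclusion back to $\Sigma$. Lemma \ref{key lemma auto} produces a solution $\gamma$ of $\Sigma_p$ passing through $Q$ and contained in $\V_{p+1}$. Since $\V_p\subseteq\V_0$ and $\V_0$ is the zero set of $(\vec{g})$, every point of the image of $\gamma$ satisfies $\vec{g}=0$; as $\gamma$ also satisfies the same differential equations $\partial\vec{y}/\partial\vec{x}=\vec{f}(\vec{y})$, it is a solution of $\Sigma$. Hence $\gamma$ is a solution of $\Sigma$ through $Q$ contained in $\V_{p+1}$, and $\Sigma$ is integrable. I do not anticipate a real obstacle here; the only points requiring care are the elementary bookkeeping that $D_j$ — and therefore $\mathfrak{F}$ and the prolongation $\widetilde{(\cdot)}$ — depend only on $\vec{f}$ and not on the algebraic constraints, and the inclusion $\mathfrak{F}\subseteq\mathfrak{I}_p$ for $p\ge 1$, which is what makes the extra summand $\mathfrak{F}$ in $\mathcal{J}$ harmless and aligns the prolongation step $\mathfrak{I}_p\rightsquigarrow\mathfrak{I}_{p+1}$ with the form treated in Lemma \ref{key lemma auto}.
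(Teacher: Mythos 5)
Your proof is correct and follows essentially the same route as the paper: introduce the auxiliary system $\Sigma_p$ with constraints $\vec{g}_p$ generating $\mathfrak{I}_p$, apply Lemma \ref{key lemma auto} to it, and transfer the solution back via $(\vec{g})\subseteq(\vec{g}_p)$. The only difference is that you spell out the verification that $\mathcal{J}=\sqrt{\mathfrak{F}+\widetilde{\mathfrak{I}_p}}$ equals $\mathfrak{I}_{p+1}$ for $p\ge 1$ (via $\mathfrak{F}\subseteq\mathfrak{I}_1\subseteq\widetilde{\mathfrak{I}_p}$), a step the paper states without comment, and you treat $p=0$ separately where the paper handles all $p$ uniformly; both are fine.
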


\begin{proof} Let $\vec{g}_p\subset \C[\vec{y}]$ be a finite system of generators of $\mathfrak{I}_p$, the defining ideal of $\mathbb{V}_p$.  Consider the autonomous differential-algebraic Pfaffian system
\[
\Sigma_p = \left\{
\begin{matrix}
\ \dfrac{\partial \vec{y}}{\partial \vec{x}}= \vec{f}(\vec{y}),\cr
\vec{g}_p(\vec{y})=0\
\end{matrix} \right.
\]
Note that the system $\Sigma_p$ verifies the assumptions of Lemma \ref{key lemma auto} with $\mathcal{I} = \mathfrak{I}_p$ and $\mathcal{J}= \mathfrak{I}_{p+1}$. Then $\Sigma_p$ is integrable and there exists a solution of $\Sigma_p$ passing through $Q$ and contained in $\V_{p+1}$. The lemma follows from the fact that $(\vec{g}) \subset (\vec{g}_p)$.
\end{proof}

\begin{corollary} \label{lemma_fund_general}
If the system $\Sigma$ is not integrable then for all integer $p$ such that $\V_p\ne \emptyset$, the inequality $\dim \V_{p+1}<\dim\V_p$ holds.
\end{corollary}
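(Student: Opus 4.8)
The plan is to prove the equivalent statement: \emph{if $\V_p\ne\emptyset$ and $\dim\V_{p+1}=\dim\V_p$, then $\Sigma$ is integrable.} This suffices, because $\V_{p+1}\subseteq\V_p$ forces $\dim\V_{p+1}\le\dim\V_p$, and if $\V_{p+1}=\emptyset$ while $\V_p\ne\emptyset$ then already $\dim\V_{p+1}<\dim\V_p$ (as $\dim\V_p\ge 0$); so the only way the claimed strict inequality can fail at a stage where $\V_p\ne\emptyset$ is $\dim\V_{p+1}=\dim\V_p$ with $\V_{p+1}\ne\emptyset$. Once we are in that situation, the goal is to put ourselves in the hypotheses of Lemma \ref{key lemma p} by exhibiting a regular point $Q$ near which $\V_p$ and $\V_{p+1}$ coincide.

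The first step is to produce a \emph{common irreducible component}. Write $e:=\dim\V_p=\dim\V_{p+1}$ and choose an irreducible component $C$ of $\V_{p+1}$ with $\dim C=e$. Since $C\subseteq\V_{p+1}\subseteq\V_p$, the set $C$ is contained in some irreducible component $C'$ of $\V_p$; from $e=\dim C\le\dim C'\le\dim\V_p=e$ and the irreducibility of $C$ and $C'$ we conclude $C=C'$. Thus $C$ is at the same time an irreducible component of $\V_p$ and of $\V_{p+1}$.

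The second step is to locate $Q$. Consider the subset of $C$ consisting of the points that are smooth points of the variety $C$, that do not lie on any other irreducible component of $\V_p$, and that do not lie on any other irreducible component of $\V_{p+1}$. This is a nonempty Zariski-open subset of $C$: the singular locus of $C$ is a proper closed subset since $C$ is irreducible, and each intersection of $C$ with another component of $\V_p$ (resp. $\V_{p+1}$) is proper closed in $C$ because distinct irreducible components are not contained in one another; removing these finitely many proper closed subsets from the irreducible $C$ leaves a nonempty open set. Pick $Q$ in it. Then a suitable neighborhood of $Q$ meets $\V_p$ only along $C$ and meets $\V_{p+1}$ only along $C$, so $\V_p$ and $\V_{p+1}$ coincide near $Q$ with $C$, and $Q$ is a regular point of $C$. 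Lemma \ref{key lemma p} now applies with this $p$ and this $Q$ and yields that $\Sigma$ is integrable (indeed, that there is a solution through $Q$ contained in $\V_{p+1}$), which is what we wanted.

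The argument is essentially routine; the only point that requires a little care is the passage from the \emph{numerical} equality $\dim\V_{p+1}=\dim\V_p$ to the existence of a genuinely \emph{shared} irreducible component (as opposed to two components of equal dimension sitting in different places), together with the verification that the candidate locus for $Q$ inside that component is nonempty, i.e. that $C$ is not swallowed by $\mathrm{Sing}(C)$ and the remaining components of $\V_p$ and $\V_{p+1}$ — which is exactly where irreducibility of $C$ is used.
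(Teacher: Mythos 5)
Your proof is correct and follows essentially the same route as the paper: extract a common irreducible component $C$ of maximal dimension from $\V_{p+1}\subseteq\V_p$, pick a suitable regular point of $C$, and invoke Lemma \ref{key lemma p}. You are in fact slightly more careful than the paper's two-line argument, in that you explicitly choose $Q$ off the singular locus and off the other components so that $\V_p$ and $\V_{p+1}$ genuinely coincide near $Q$, which is exactly what the lemma's hypothesis requires.
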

\begin{proof}
 If the relation $\dim \V_p=\dim \V_{p+1}$ holds for some integer $p\ge 0$ with $\V_p\ne \emptyset$, since $\V_{p+1}\subseteq \V_p$, the varieties $\V_p$ and $\V_{p+1}$ share a common (nonempty) irreducible component $C$ of maximal dimension. Then, any regular point $Q\in C$ verifies the hypotheses of Lemma \ref{key lemma auto} and, therefore, $\Sigma$ is integrable.
\end{proof}\\

The following proposition, which is an immediate consequence of  Corollary \ref{lemma_fund_general}, gives a first criterion for the integrability of $\Sigma$:

\begin{proposition} \label{criterion1}
Let $\Sigma$ be an autonomous differential-algebraic Pfaffian system. Then, $\Sigma$ is integrable if and only if $\V_{p_\infty}\ne \emptyset$.
\end{proposition}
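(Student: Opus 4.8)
The plan is to derive Proposition~\ref{criterion1} as a straightforward consequence of the machinery already in place, namely Corollary~\ref{lemma_fund_general}, Lemma~\ref{key lemma p}, and the stabilization of the prolongation chain at the index $p_\infty$. First I would prove the easy direction: if $\Sigma$ is integrable, then there is an analytic solution $\gamma$, and every polynomial in $\mathfrak{I}_0=\sqrt{(\vec{g})}$ vanishes along the graph of $\gamma$; since $D_j$ is the total derivative induced by $\Sigma$, each $D_j(h)$ also vanishes along $\gamma$, so by induction on $p$ the graph of $\gamma$ is contained in every $\V_p$, hence in $\V_{p_\infty}$. In particular $\V_{p_\infty}\ne\emptyset$.

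For the converse, I would argue by contraposition: suppose $\Sigma$ is not integrable. If $\V_{p_\infty}=\emptyset$ we are done, so assume $\V_{p_\infty}\ne\emptyset$. By definition of $p_\infty$ the chain is stationary at that index, so $\V_{p_\infty}=\V_{p_\infty+1}\ne\emptyset$; in particular $\dim\V_{p_\infty}=\dim\V_{p_\infty+1}$. But Corollary~\ref{lemma_fund_general} asserts that non-integrability forces $\dim\V_{p+1}<\dim\V_p$ for every $p$ with $\V_p\ne\emptyset$, applied here with $p=p_\infty$ this yields a contradiction. Hence if $\Sigma$ is not integrable then $\V_{p_\infty}=\emptyset$, which is the contrapositive of the desired implication ``$\V_{p_\infty}\ne\emptyset\Rightarrow\Sigma$ integrable''.

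Alternatively, and perhaps more transparently, the converse can be argued directly: if $\V_{p_\infty}\ne\emptyset$, then since $\V_{p_\infty}=\V_{p_\infty+1}$ as sets (indeed as varieties, their defining radical ideals coincide), these two varieties agree on a neighborhood of any of their points; choosing $Q$ to be a regular point of some irreducible component of maximal dimension of $\V_{p_\infty}$, the hypotheses of Lemma~\ref{key lemma p} are met with $p=p_\infty$, and we conclude that $\Sigma$ is integrable (with a solution through $Q$ inside $\V_{p_\infty+1}=\V_{p_\infty}$).

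I do not expect any serious obstacle here: the substantive content has already been carried out in Lemma~\ref{key lemma auto} and Lemma~\ref{key lemma p}, and Corollary~\ref{lemma_fund_general} packages exactly the dimension-drop phenomenon needed. The only minor point requiring care is the observation that the graph of an analytic solution is a constructible/analytic set on which all iterated total derivatives of the constraints vanish, so that it embeds into each $\V_p$; this is the standard fact that prolongation adds no genuine constraints relative to an actual solution, and it justifies both that $\V_{p_\infty}\ne\emptyset$ when $\Sigma$ is integrable and, a posteriori, the second assertion of Theorem~\ref{thm_non_aut} that $\V_\infty$ (the regular locus of $\V_{p_\infty}$) is the maximal analytic variety containing all solution graphs.
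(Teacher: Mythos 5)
Your proposal is correct and follows essentially the same route as the paper: necessity is shown by observing that the graph of any solution lies in every $\V_p$ (hence in $\V_{p_\infty}$), and sufficiency is proved by contradiction via Corollary~\ref{lemma_fund_general} together with the stationarity of the chain at $p_\infty$. Your alternative direct argument invoking Lemma~\ref{key lemma p} at a regular point of $\V_{p_\infty}=\V_{p_\infty+1}$ is also valid and is, in fact, what underlies the paper's Corollary~\ref{lemma_fund_general}.
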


\begin{proof}
First observe that the condition $\V_{p_\infty}\ne \emptyset$ is clearly necessary because if $\gamma$ is a solution of $\Sigma$, its image is contained in any variety $\V_p$, in particular, in $\V_{p_\infty}$ and then, $\V_{p_\infty}\ne \emptyset$. Reciprocally, if $\V_{p_\infty}\ne \emptyset$ but $\Sigma $ is not integrable, Corollary \ref{lemma_fund_general} implies that the decreasing chain is not stationary at level $p_\infty$, which contradicts the definition of the integer $p_\infty$. The proposition follows. \end{proof}

\begin{remark}
In the language of Proposition \ref{criterion1}, the classical Frobenius Theorem can be stated as follows: assume that no algebraic constraints appear in the system $\Sigma$, then $\Sigma$ is \emph{completely} integrable if and only if $\V_{p_\infty}=\C^n$ (or equivalently $\mathfrak{I}_{p_\infty}=0$). Obviously in this case $p_\infty=0$.
\end{remark}

\begin{example} Consider the following autonomous Pfaffian system for $m=n=2$ with no algebraic constraints:
\[
\Sigma = \left\{ \begin{matrix}
\ \dfrac{\partial y_1}{\partial x_1}\ & = & \!\!\!\!\!\!\!\!\!\!\!\!\!\!\!\!\!\!y_1 &\qquad \dfrac{\partial y_1}{\partial x_2}\ & = & y_1^2 \cr \ &\ &\ &\ &\ & \cr
\dfrac{\partial y_2}{\partial x_1}\ & = & y_1y_2+1 &\qquad \dfrac{\partial y_2}{\partial x_2}\ & = & y_1^2  \end{matrix} \right. \qquad .
\]
In this example, $\mathfrak{I}_0 = 0$. The Frobenius conditions are $y_1^2$ and  $y_1^2(y_2+y_1-2)$; therefore, the polynomial ideal $\mathfrak{F}\subset \C[y_1,y_2]$ is the principal ideal generated by $y_1^2$ and hence its radical $\mathfrak{I}_1$ is generated by $y_1$. In order to obtain the ideal  $\mathfrak{I}_2$, we compute the polynomials $D_1(y_1)$ and $D_2(y_1)$ obtaining $y_1$ and $y_1^2$, respectively. Then $\mathfrak{I}_2=\sqrt{(y_1,y_1^2)}=(y_1)=\mathfrak{I}_1$ and so, $p_\infty=1$.  Since $(y_1)\ne \C[y_1,y_2]$ the system $\Sigma$ is \emph{integrable} (but not \emph{completely integrable}). Moreover, any solution $\gamma:=(\gamma_1,\gamma_2)$ must satisfy $\gamma_1(x_1,x_2)\equiv 0$, because its image must be included in the line $\{y_1=0\}$. Hence, $\gamma_2$ does not depend on $x_2$ and satisfies $\dfrac{\partial \gamma_2}{\partial x_1}=1$ and therefore   $\gamma_2(x_1)=x_1+\lambda$ for a suitable constant $\lambda\in \C$. In other words, the solutions of $\Sigma$ are $\gamma(x_1,x_2)=(0,x_1+\lambda)$.
\end{example}

Proposition \ref{criterion1} gives a conceptually simple criterion to decide the integrability of a Pfaffian system $\Sigma$: it suffices to compute $p_\infty$ and to check if the algebraic variety $\V_{p_\infty}$ is empty or not. Even if the integer $p_\infty$ exists by Noetherianity, \emph{a priori} it could be too big. However we will show that $p_\infty$ is in fact bounded by the dimension of the ambient space.

\begin{proposition} \label{component}
Let $\Sigma$ be an autonomous differential-algebraic Pfaffian system.  If,  for some $p_0\in \N_0$, $C\subseteq \V_{p_0}$ is an irreducible component which is also included in $\V_{p_0+1}$, then $C$ is an irreducible component of $\V_{p_\infty}$. Moreover, $\V_{p_\infty}=\V_{n+1}$ and then, $p_\infty\le n+1$.
\end{proposition}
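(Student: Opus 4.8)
The plan is to prove the first assertion by showing that the prime ideal $\mathfrak p$ of the surviving component $C$ is a \emph{differential} ideal, i.e.\ $D_j(\mathfrak p)\subseteq\mathfrak p$ for every $j$; once this is in place, the whole prolongation chain is trapped inside $\mathfrak p$ and $C$ persists forever. First I would record, exactly as in the proof of Proposition \ref{rulo}, that $\widetilde{\mathfrak I_{p_0}}$ in fact contains $D_j(g)$ for \emph{every} $g\in\mathfrak I_{p_0}$ and every $j$. Since $C\subseteq\V_{p_0}$ and $C\subseteq\V_{p_0+1}$ translate into $\mathfrak I_{p_0}\subseteq\mathfrak p$ and $\widetilde{\mathfrak I_{p_0}}\subseteq\mathfrak I_{p_0+1}\subseteq\mathfrak p$, this already gives $D_j(\mathfrak I_{p_0})\subseteq\mathfrak p$. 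The crucial upgrade to $D_j(\mathfrak p)\subseteq\mathfrak p$ is where the hypothesis that $C$ is a whole irreducible component, together with $\mathfrak I_{p_0}$ being radical, is used: decompose $\mathfrak I_{p_0}=\mathfrak p\cap\mathfrak a$, where $\mathfrak a$ is the intersection of the remaining minimal primes (take $\mathfrak a=(1)$ if $C$ is the only component); by prime avoidance $\mathfrak a\not\subseteq\mathfrak p$, so pick $s\in\mathfrak a\setminus\mathfrak p$. Then for any $h\in\mathfrak p$ one has $sh\in\mathfrak p\cap\mathfrak a=\mathfrak I_{p_0}$, hence $D_j(sh)=D_j(s)\,h+s\,D_j(h)\in\mathfrak p$; since $D_j(s)\,h\in\mathfrak p$ and $s\notin\mathfrak p$, primality of $\mathfrak p$ forces $D_j(h)\in\mathfrak p$.

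With $D_j(\mathfrak p)\subseteq\mathfrak p$ established, $\widetilde{\mathfrak p}=\mathfrak p$. Next I would note that prolongation is monotone, $I\subseteq J\Rightarrow\widetilde I\subseteq\widetilde J$ (because $\widetilde J$ contains $D_j$ of every element of $J$), and prove by induction on $q\ge p_0$ that $\mathfrak I_q\subseteq\mathfrak p$: indeed $\mathfrak I_q\subseteq\mathfrak p$ gives $\widetilde{\mathfrak I_q}\subseteq\widetilde{\mathfrak p}=\mathfrak p$, whence $\mathfrak I_{q+1}=\sqrt{\widetilde{\mathfrak I_q}}\subseteq\sqrt{\mathfrak p}=\mathfrak p$. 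Therefore $\mathfrak I_{p_\infty}\subseteq\mathfrak p$, i.e.\ $C\subseteq\V_{p_\infty}$; and since $C$ is a maximal irreducible subset of the larger variety $\V_{p_0}\supseteq\V_q\supseteq\V_{p_\infty}$, it is actually an irreducible component of $\V_q$ for every $q\ge p_0$, in particular of $\V_{p_\infty}$.

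For the ``moreover'' part I would bound the number of prolongations by tracking the \emph{spurious} components of $\V_p$, meaning the irreducible components of $\V_p$ that are not among the (finitely many) components $C_1,\dots,C_t$ of $\V_{p_\infty}$. Using that $\mathfrak I_p\subsetneq\mathfrak I_{p+1}$ for $p<p_\infty$, one sees that $\V_p=\V_{p_\infty}$ exactly for $p\ge p_\infty$, hence $p_\infty$ is precisely the first index at which $\V_p$ has no spurious component. Let $\delta_p$ be the maximal dimension of a spurious component of $\V_p$, with $\delta_p:=-\infty$ if there is none. I claim $\delta_{p+1}\le\delta_p-1$ whenever $\delta_p\ge 0$: a spurious component $D$ of $\V_{p+1}$ lies inside some component $E$ of $\V_p$; if $E=C_i$ then $D\subseteq C_i\subseteq\V_{p+1}$ forces $D=C_i$, contradicting spuriousness, so $E$ is spurious; and $D=E$ is impossible because then $E$ would be a component of $\V_p$ contained in $\V_{p+1}$, hence a component of $\V_{p_\infty}$ by the first part — again a contradiction; thus $D\subsetneq E$ and $\dim D<\dim E\le\delta_p$. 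Since $\delta_0\le\dim\V_0\le n$, iterating gives $\delta_{n+1}=-\infty$, so $p_\infty\le n+1$, and consequently $\V_{p_\infty}=\V_{p_\infty+1}=\dots=\V_{n+1}$.

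The main obstacle is the single step $D_j(\mathfrak p)\subseteq\mathfrak p$: one must pass from the information that the generators of $\mathfrak I_{p_0}$ and their $D_j$-prolongations vanish on $C$ to the genuinely stronger statement that $\mathfrak p$ is a differential ideal, and this is exactly where one needs $C$ to be a full component of the \emph{radical} ideal $\mathfrak I_{p_0}$ (the analogous claim is false for an arbitrary subvariety of $\V_{p_0}$). Everything else — the Proposition \ref{rulo}-style identities, the monotonicity of prolongation, and the dimension count on spurious components — is routine bookkeeping, and the non-integrable case is recovered as the special case $t=0$ of the argument, in agreement with Corollary \ref{lemma_fund_general}.
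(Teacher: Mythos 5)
Your proposal is correct, and its treatment of the first assertion is genuinely different from the paper's. The paper proves that $C$ persists in the chain by invoking Lemma~\ref{key lemma p}: through every regular point of $C$ passes an analytic solution of $\Sigma$, every solution lies in all the $\V_p$, and density of regular points then pins $C$ inside $\V_{p_\infty}$. You instead prove the purely algebraic statement that the minimal prime $\mathfrak p$ of $C$ is a differential ideal, by the classical ``multiply by $s\in\mathfrak a\setminus\mathfrak p$'' device; this is the standard argument that minimal primes of a radical differential ideal are differential, here adapted to the weaker hypothesis $D_j(\mathfrak I_{p_0})\subseteq\mathfrak p$ (available because $C\subseteq\V_{p_0+1}$ and $\widetilde{\mathfrak I_{p_0}}\subseteq\mathfrak I_{p_0+1}$). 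Your route has the merit of making Proposition~\ref{component} self-contained algebra, entirely independent of the Frobenius-type analytic machinery of Lemmas~\ref{key lemma auto} and~\ref{key lemma p}, and of isolating clearly where the ``full component of a radical ideal'' hypothesis is used (namely, to produce the element $s$ via minimality of $\mathfrak p$). Two small points worth tidying: the recursion $\mathfrak I_{q+1}=\sqrt{\widetilde{\mathfrak I_q}}$ only holds for $q\ge 1$ (for $q=0$ the Frobenius ideal $\mathfrak F$ also enters), so the induction should be anchored at $q=p_0+1$, whose inclusion $\mathfrak I_{p_0+1}\subseteq\mathfrak p$ is already a hypothesis; and $\mathfrak a\not\subseteq\mathfrak p$ follows not from prime avoidance but from minimality of $\mathfrak p$ (if $\mathfrak a\subseteq\mathfrak p$ then some other minimal prime would be contained in $\mathfrak p$). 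For the ``moreover'' part, your accounting via maximal dimensions of spurious components and the paper's argument via a single decreasing chain of components are essentially the same dimension count, packaged differently; both give $p_\infty\le n+1$.
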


\begin{proof}
Due to Lemma \ref{key lemma p},  we can guarantee that for any regular point $Q\in C$ passes locally the image of a solution of $\Sigma$. Since every solution of $\Sigma$ is contained in all the varieties $\V_p$, we conclude in particular that the closure of $\textrm{Reg}(C)$ is contained in $\V_{p_\infty}$. On the other hand, the regular points form a dense subset of $C$, so $C$ is an irreducible algebraic set contained in $\V_{p_\infty}$. Moreover, it is also an irreducible component of $\V_{p_\infty}$ because by assumption it is an irreducible component of $\V_p$ and $\V_{p_\infty}\subseteq \V_p$.

If $\V_{p}$ is empty for some index $p$, then $\V_{p_\infty}$ is empty too. Therefore, without loss of generality we may suppose $\V_{n+1}\ne \emptyset$. Let $C_{n+1}$ be an irreducible component of $\V_{n+1}$. There exists a decreasing chain of irreducible varieties $C_p$, $p=0,\ldots,n+1$, where each $C_p$ is an irreducible component of $\V_p$. If the sequence $(C_p)_{p}$ is strictly decreasing, we have $\dim(C_{n+1})< \dim(C_{n})<\cdots<\dim(C_1)<\dim(C_0)\le n$ and then $C_{n+1}=\emptyset$, leading to a contradiction. Hence the sequence $(C_p)_{p}$ stabilizes in $C_{n+1}$ for some index $p\le n$. Thus, $C_{n+1}$ is an irreducible component of $\V_{p_\infty}$. Since this holds for any irreducible component of $\V_{n+1}$, we deduce that $\V_{n+1}\subseteq \V_{p_\infty}$. The other inclusion is always true; therefore, we have $\V_{p_\infty}=\V_{n+1}$.
\end{proof}

\begin{remark} The variety  $M:=\emph{Reg}(\V_{p_\infty})\subseteq \C^{n}$ is the \emph{integral submanifold} associated to the system $\Sigma$ in the usual sense: \emph{for each point $Q\in M$, {its tangent space $T_Q M$ is spanned by the vector fields determined by the differential system
$\Sigma$}, and the dimension of $M$ is maximal with this property}. For instance, in the previous example we have   $\V_{p_\infty}=\{0\}\times \C$ and the solutions are $\gamma(x_1,x_2)=(0,x_1+\lambda)$.

Hence, the invariant
$\varrho:=\dim(\V_{p_\infty})$ is a measure of the integrability of the system $\Sigma$: it agrees with the maximal dimension of integral submanifolds of $\Sigma$. The extreme cases  $\varrho=n$, and $\varrho=-1$ correspond to the  complete integrability and the inconsistency of $\Sigma$, respectively. In the previous example we have $\varrho=1$.

\end{remark}

We summarize Propositions \ref{criterion1} and \ref{component} in the following criterion concerning the integrability of $\Sigma$:

\begin{theorem} \textbf{\emph{(Criterion for the integrability of a Pfaffian system)}} \label{criterio}
Let $\Sigma$ be an autonomous  differential-algebraic Pfaffian system. Then
\[{}\qquad \Sigma \ \textrm{is\ integrable\ } \Longleftrightarrow
\ \V_{p_\infty}\ne \emptyset \ \Longleftrightarrow \ \V_{n+1}\ne \emptyset. \qquad {\
\rule{0.5em}{0.5em}}\]
\end{theorem}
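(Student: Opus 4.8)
The plan is to observe that this theorem is nothing more than the conjunction of the two results just proved, so the proof consists in citing them and checking that they fit together. First I would establish the first equivalence, $\Sigma$ integrable $\iff \V_{p_\infty}\neq\emptyset$: this is exactly the content of Proposition \ref{criterion1}, which was deduced from Corollary \ref{lemma_fund_general} (the dimension-drop property of the prolongation chain at non-integrable systems) together with the defining property of $p_\infty$ as the stabilization index. Nothing new is needed here.

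For the second equivalence, $\V_{p_\infty}\neq\emptyset \iff \V_{n+1}\neq\emptyset$, I would invoke Proposition \ref{component}, whose conclusion includes the identity $\V_{p_\infty}=\V_{n+1}$ (equivalently $p_\infty\le n+1$). Once that equality of varieties is in hand, the equivalence of the non-emptiness conditions is trivial. So the full proof is essentially: ``The first equivalence is Proposition \ref{criterion1}. Since $\V_{p_\infty}=\V_{n+1}$ by Proposition \ref{component}, the second equivalence follows at once.''

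There is no real obstacle: all the work has been done in the preceding propositions, and in particular the only mildly substantive point — that the chain $\V_0\supseteq\V_1\supseteq\cdots$ stabilizes by step $n+1$ because a strictly decreasing chain of irreducible components would force a strictly decreasing chain of dimensions bounded above by $n$ — is already carried out inside the proof of Proposition \ref{component}. Thus I would keep the proof to one or two sentences, simply recording that Theorem \ref{criterio} is the combination of Propositions \ref{criterion1} and \ref{component}, with the chain of equivalences read off directly. If one wanted to be slightly more self-contained one could additionally remark that $\V_{p_\infty}\neq\emptyset$ is necessary because the image of any analytic solution $\gamma$ lies in every $\V_p$, but this too is already noted in the proof of Proposition \ref{criterion1}.
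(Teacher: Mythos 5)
Your proposal matches the paper exactly: the authors state Theorem \ref{criterio} with no separate proof, presenting it explicitly as a summary of Proposition \ref{criterion1} (the first equivalence) and Proposition \ref{component} (giving $\V_{p_\infty}=\V_{n+1}$, hence the second equivalence). Nothing to add.
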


\bigskip

For a non-autonomous differential-algebraic Pfaffian system $\Sigma$, Theorem \ref{thm_non_aut} stated in the Introduction is proved by considering the associated autonomous system $\Sigma_{\rm aut}$ defined as in (\ref{sigma_aut}), and making all the constructions in this section from $\Sigma_{\rm aut}$.

\section{Quantitative Aspects}\label{sec:quantitative}

\subsection{Some tools from effective commutative algebra and algebraic geometry} \label{basic}

Throughout this section we will apply some results from effective commutative algebra and algebraic geometry. We recall them here in the precise formulations we will use.

\bigskip

One of the results we will apply is an effective version of the strong Hilbert's Nullstellensatz (see for instance \cite[Theorem 1.3]{Jelonek05}):

\begin{proposition}\label{prop:exprad}
Let $f_1,\dots, f_s\in {\C}[z_1,\dots, z_n]$ be polynomials of degrees bounded by $d$, and let $I=(f_1,\dots, f_s)\subset {\C}[z_1,\dots, z_n]$. Then $(\sqrt{I})\,^{d^n}\subset I$.
\end{proposition}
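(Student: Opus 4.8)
This statement is a quantitatively sharp form of the strong Nullstellensatz: the exponent $d^n$ is precisely the one furnished by the effective Nullstellensatz of Kollár, and in exactly the form stated it is \cite[Theorem 1.3]{Jelonek05}. Accordingly, my plan is first to reduce the ideal inclusion to a pointwise power bound, and then to indicate how the exponent $d^n$ is produced. For the reduction, write $\sqrt{I}=(g_1,\dots,g_t)$ and, for scalars $c_1,\dots,c_t$, expand $(c_1g_1+\cdots+c_tg_t)^{d^n}=\sum_{|\alpha|=d^n}\binom{d^n}{\alpha}c^{\alpha}g^{\alpha}$. If one already knows that $g^{d^n}\in I$ for \emph{every} $g\in\sqrt I$, then applying this to the $\C$-linear combinations $c_1g_1+\cdots+c_tg_t\in\sqrt I$ for finitely many tuples $(c_1,\dots,c_t)$ in general position and inverting the resulting generalized Vandermonde system over the infinite field $\C$ yields $g^{\alpha}\in I$ for each multi-index $\alpha$ with $|\alpha|=d^n$; since those monomials generate $(\sqrt I)^{d^n}$, the inclusion $(\sqrt I)^{d^n}\subset I$ follows. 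So it suffices to prove the pointwise statement $g\in\sqrt I\Rightarrow g^{d^n}\in I$.

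For the pointwise statement I would use the Rabinowitsch argument together with an effective \emph{weak} Nullstellensatz. Fixing $g\in\sqrt I$, the set $V(f_1,\dots,f_s,\,1-tg)\subset\C^{n+1}$ is empty, so an effective weak Nullstellensatz gives an identity $1=\sum_i h_if_i+h\,(1-tg)$; substituting $t=1/g$ and clearing denominators extracts $g^{N}\in I$ with $N$ controlled by the degree bound in the weak Nullstellensatz. The crucial point — and the main technical obstacle — is that a naive application makes $N$ depend on $\deg g$, because $1-tg$ has degree $1+\deg g$. The device of Kollár and Jelonek is to replace $g$ beforehand by a polynomial of degree at most $d$ cutting out, locally along $V(I)$, the same vanishing locus (via a generic linear projection of $\C^n$), after which all the polynomials entering the weak Nullstellensatz have degree $\le d$ in $n+1$ variables and the resulting exponent is $d^n$. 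An alternative route, which I would keep in reserve, is to pass to $A:=\C[z_1,\dots,z_n]/I$, put $A$ in Noether position after a generic linear change of coordinates so that it is a finite module over a polynomial subring, bound the degrees of a set of module generators by a Bézout estimate of order $d^n$, and deduce that the index of nilpotence of $\sqrt I/I$ in $A$ is at most $d^n$.

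The genuinely delicate parts are therefore the degree bookkeeping — ensuring that no spurious factor beyond $d$ is introduced "per elimination step", so that the total exponent is exactly $d^n$ and not something larger — and the several genericity inputs (generic projection, Noether position, general-position scalars in the interpolation step), all of which are available since $\C$ is algebraically closed and infinite. These ingredients are carried out in full in \cite{Jelonek05}, so for the sake of brevity I would simply invoke that reference for the bound $(\sqrt I)\,^{d^n}\subset I$, after recording the reduction above to make the role of the exponent transparent.
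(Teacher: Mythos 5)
The paper gives no proof of this proposition: it is stated verbatim as an external result, quoted directly from \cite[Theorem 1.3]{Jelonek05}. Your proposal correctly recognizes this and, like the paper, ultimately defers to that reference; the polarization reduction and the Rabinowitsch/Kollár sketch you add are sound but superfluous, since the cited theorem already asserts $(\sqrt{I})^{d^n}\subset I$ in exactly the form needed.
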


In order to obtain upper bounds for the number and degrees of generators of the radical of a polynomial ideal,  we will apply the following estimates, which follow from the algorithm
presented in \cite[Section 4]{Laplagne06} (see also \cite{KL91b}, \cite{KL91a}) and estimates for the number and degrees of polynomials involved in Gr\"obner basis computations (see, for instance, \cite{Dube90}, \cite{MM84} and \cite{Giusti84}):

\begin{proposition} \label{radicalgen}
 Let $I=(f_1,\dots, f_s)\subset {\C}[z_1,\dots, z_n]$ be an ideal  generated by $s$ polynomials of degrees at most $d$ that define an algebraic variety of dimension $r$ and let $\nu=\max\{1, r\}$. Then, the radical ideal $\sqrt{I}$ can be generated by $(sd)^{2^{O(\nu n)}}$ polynomials of degrees at most $(sd)^{2^{O(\nu n)}}$.
 \end{proposition}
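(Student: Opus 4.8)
The plan is to compose two effective results: an algorithm that computes the radical of a polynomial ideal, and degree/count estimates for Gröbner basis computations. First I would recall the structure of the radical computation algorithm from \cite[Section 4]{Laplagne06} (in the spirit of \cite{KL91b}, \cite{KL91a}): it proceeds by a recursion on the dimension, at each stage reducing to an ideal of strictly smaller dimension via a generic linear change of coordinates and a Noether-type normalization, computing iterated ideal quotients, saturations, and elimination ideals, and finally intersecting the equidimensional radical pieces. The key point is that every single operation invoked — computing a Gröbner basis with respect to a (possibly elimination) term order, ideal quotients $I:J$, saturations $I:J^\infty$, intersections, and radicals of zero-dimensional ideals — is controlled, in the number and degrees of the output polynomials, by a bound of the form $(sd)^{2^{O(n)}}$ on the inputs, by the classical estimates for Gröbner bases (\cite{Dube90}, \cite{MM84}, \cite{Giusti84}): a Gröbner basis of an ideal generated by polynomials of degree $\le D$ in $n$ variables has degrees bounded by $2(D^2/2 + D)^{2^{n-1}}$, which is again of the form $D^{2^{O(n)}}$, and the same type of bound governs the number of basis elements.

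Next I would track how these bounds propagate through the recursion. The ambient dimension stays $n$ throughout, and the recursion on the geometric dimension has depth at most $r+1 \le \nu + 1$, so there are $O(\nu)$ nested levels. At each level the degree bound gets raised to a power of the shape $2^{O(n)}$ (from one Gröbner basis computation / elimination step), and the number of generators is multiplied by a similar factor; composing $O(\nu)$ such steps turns an initial bound $d$ into $d^{(2^{O(n)})^{O(\nu)}} = d^{2^{O(\nu n)}}$, and similarly $s$ gets absorbed so that the final estimate for both the number and the degrees of a system of generators of $\sqrt{I}$ is $(sd)^{2^{O(\nu n)}}$. I would present this as a clean induction on the number of recursion levels, with the inductive hypothesis being precisely that after $k$ levels one has at most $(sd)^{2^{O(\nu n)}}$ polynomials of degree at most $(sd)^{2^{O(\nu n)}}$ (absorbing the level index into the $O(\cdot)$ since $k \le \nu+1$).

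The main obstacle I anticipate is bookkeeping rather than mathematics: making sure that the doubly-exponential bounds genuinely compose to $(sd)^{2^{O(\nu n)}}$ and not to something worse — in particular that raising a bound of the form $(sd)^{2^{cn}}$ to the power $2^{c'n}$, and doing so $O(\nu)$ times, still lands in $(sd)^{2^{O(\nu n)}}$ (it does, since $2^{cn}\cdot 2^{c'n}\cdots = 2^{O(\nu n)}$ after $O(\nu)$ factors), and that the operations in \cite{Laplagne06} do not silently increase the number of variables or the dimension in a way that would spoil the exponent. I would therefore be careful to state once and for all that each elementary operation (Gröbner basis, quotient, saturation, elimination, zero-dimensional radical, intersection) obeys a uniform bound $(\text{input bound})^{2^{O(n)}}$ on output number and degree, cite the relevant references for each, and then invoke it uniformly; the rest is the routine composition argument sketched above. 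Since the statement only asks for the existence of such a generating system with these bounds, no attention to constants is needed, which keeps the argument short.
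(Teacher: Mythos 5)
The paper states Proposition~\ref{radicalgen} without proof, asserting only that it follows from the radical--computation algorithm of \cite[Section 4]{Laplagne06} (cf.\ \cite{KL91b}, \cite{KL91a}) together with the Gr\"obner--basis degree bounds of \cite{Dube90}, \cite{MM84}, \cite{Giusti84}. Your reconstruction --- a recursion of depth $O(\nu)$ on the geometric dimension inside fixed ambient dimension $n$, each level incurring a $(\,\cdot\,)^{2^{O(n)}}$ blowup in degree and in number of generators from a constant number of Gr\"obner--basis, elimination, quotient, and saturation operations, with the iterated exponentiation composing to $\bigl(2^{O(n)}\bigr)^{O(\nu)}=2^{O(\nu n)}$ --- is precisely the intended derivation, and the bookkeeping is correct.
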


\subsection{An effective decision method}

The integrability criteria presented in Section \ref{sec:integrability} enable the application of tools from effective algebraic geometry in order to derive a decision method for the Pfaffian systems under consideration.

As stated in Theorem \ref{criterio}, we have that for an autonomous differential system $\Sigma$ in $n$ differential unknowns, $\Sigma$ is integrable if and only if $\V_{n+1}\ne \emptyset$. We start by estimating the number and degrees of polynomials generating the intermediate ideals $\frak{I}_p$ and the complexity of computing them.

\begin{lemma}\label{gradogs} With our previous notation, let $\nu:=\max\{1, \dim(\V_0)\}$, $\sigma:=\max\{1, s\}$ and $d:=\max\{\deg(\vec{f}), \deg(\vec{g})\}$. There exists a universal constant $c>0$ such that for each $0\le p\le p_{\infty}$, the ideal $\mathfrak{I}_p$ can be generated by a family of polynomials $\vec{g}_p$ whose number and degrees are bounded by $(nm \sigma d)^{2^{c(p+1)\nu n}}$. These polynomials can be computed algorithmically within complexity $(n m\sigma d)^{2^{O(n^3)}}$.
\end{lemma}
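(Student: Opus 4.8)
The plan is to proceed by induction on $p$, tracking simultaneously the number of generators, their degrees, and the arithmetic complexity of producing them, using the two effective tools recalled in Section~\ref{basic} (the effective Nullstellensatz, Proposition~\ref{prop:exprad}, and the effective radical computation, Proposition~\ref{radicalgen}) together with standard degree bounds for Gr\"obner bases. First I would fix the base case $p=0$: here $\mathfrak{I}_0=\sqrt{(\vec g)}$, where $\vec g$ consists of $s\le\sigma$ polynomials of degree at most $d$ in $n$ variables defining a variety of dimension $\dim(\V_0)\le\nu$, so Proposition~\ref{radicalgen} directly gives a generating family $\vec g_0$ of cardinality and degree bounded by $(\sigma d)^{2^{O(\nu n)}}\le (nm\sigma d)^{2^{c\nu n}}$ for a suitable universal $c$; the cost of this radical computation is subsumed in $(nm\sigma d)^{2^{O(n^3)}}$.

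For the inductive step I would assume $\mathfrak{I}_p$ is generated by a family $\vec g_p$ of cardinality and degree at most $B_p:=(nm\sigma d)^{2^{c(p+1)\nu n}}$, and analyze the two operations that produce $\mathfrak{I}_{p+1}$. The prolongation $\widetilde{\mathfrak{I}_p}$ is generated by $\vec g_p$ together with the polynomials $D_j(g)=\sum_{i}\tfrac{\partial g}{\partial y_i}f_{ij}$ for $g\in\vec g_p$ and $1\le j\le m$ (and, only when $p=0$, also the $O((nm)^2)$ Frobenius generators $D_j(f_{ik})-D_k(f_{ij})$, each of degree $O(d^2)$); this yields at most $(m+1)B_p + O((nm)^2)$ generators of degree at most $B_p+d-1$, and computing them is a matter of differentiation and multiplication of polynomials of controlled degree, hence polynomial in $B_p$ and the number of monomials, which is $(nm\sigma d)^{2^{O(n^3)}}$. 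Then I would apply Proposition~\ref{radicalgen} again, with the role of $d$ played by $\sim B_p+d$, the role of $s$ by $\sim (m+1)B_p$, and the role of $r$ by $\dim(\V_{p+1})\le\nu$: this produces a generating set $\vec g_{p+1}$ of $\mathfrak{I}_{p+1}$ whose cardinality and degrees are bounded by $((m+1)B_p\cdot(B_p+d))^{2^{O(\nu n)}}$. The key estimate is then the recursion: $((m+1)B_p(B_p+d))^{2^{O(\nu n)}}\le (B_p^2)^{2^{O(\nu n)}}= B_p^{2^{O(\nu n)}}$, and since $B_p=(nm\sigma d)^{2^{c(p+1)\nu n}}$ we get $B_{p+1}\le (nm\sigma d)^{2^{c(p+1)\nu n}\cdot 2^{O(\nu n)}}=(nm\sigma d)^{2^{(c(p+1)+O(1))\nu n}}$; choosing $c$ larger than the implied constant makes the exponent at most $2^{c(p+2)\nu n}$, closing the induction. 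The complexity of each radical computation is again governed by Proposition~\ref{radicalgen} applied to inputs of the indicated sizes, which stays within $(nm\sigma d)^{2^{O(n^3)}}$ because $p\le p_\infty\le n+1$ by Proposition~\ref{component}, so $\nu(p+1)\le (n+1)^2=O(n^2)$ and the towers of exponents never exceed height contributing more than $2^{O(n^3)}$.

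The main obstacle — really the only delicate point — is bookkeeping of the doubly-exponential bounds through the recursion so that the constant $c$ can be chosen once and for all, uniformly in $p$: one must verify that each radical step multiplies the ``inner'' exponent $c(p+1)\nu n$ by an additive (not multiplicative) amount of order $\nu n$, so that after the at most $n+2$ iterations allowed by $p\le p_\infty\le n+1$ the final exponent is still of order $\nu n^2$ and the overall quantity is $(nm\sigma d)^{2^{O(\nu n^2)}}$, well within the claimed $(nm\sigma d)^{2^{c(p+1)\nu n}}$ for each individual $p$; and, separately, that the Gr\"obner-basis and radical-algorithm costs, which are doubly exponential in the \emph{number of variables} $n$ with a polynomial dependence on input degree and number of generators, compose over the $p_\infty+1$ steps to something still of the form $(nm\sigma d)^{2^{O(n^3)}}$ — here the extra factor $n$ in the exponent $n^3$ versus $\nu n^2$ absorbs the iteration and the polynomial-in-$B_p$ arithmetic. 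Everything else is routine manipulation of polynomials of prescribed degree in a prescribed number of variables.
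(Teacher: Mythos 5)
Your proof is correct and follows essentially the same route as the paper: induction on $p$, applying Proposition~\ref{radicalgen} to the prolongation $\widetilde{\mathfrak{I}_p}$ (with the Frobenius generators adjoined only at the step producing $\mathfrak{I}_1$), and observing that each radical computation increases the inner exponent $c(p+1)\nu n$ by an additive term of order $\nu n$, which is absorbed by enlarging the universal constant $c$. The paper's own proof carries out exactly this bookkeeping (choosing $c\ge c_0+2$ in the inductive step) and derives the overall $(nm\sigma d)^{2^{O(n^3)}}$ complexity from $p_\infty\le n+1$ in the same way.
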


\begin{proof} For $p=0$, we have that $\vec{g}_0$ is a set of generators of $\sqrt{(\vec{g})}$; then, by Proposition \ref{radicalgen}, they can be chosen to be at most $(\sigma d)^{2^{c_0\nu n}}$ polynomials of degrees bounded by $(\sigma d)^{2^{c_0\nu n}}$, where $c_0$ is a universal constant.

The ideal $\mathfrak{I}_1$ is the radical of the polynomial ideal generated by $\vec{g}_0$, $D_j(g)$ for every $g$ in $\vec{g}_0$ and $1\le j \le m$, and the Frobenius conditions $D_j(f_{ik})-D_k(f_{ij})$ for all indices $i,j,k$ with $i=1,\ldots,n$, and $j,k=1,\ldots,m$. For each polynomial $g$ in $\vec{g}_0$, we add $m$ polynomials $D_j(g)$, for $1\le j \le m$, and, since $\deg(\vec{f}) \le d$, we have that $\deg(D_j(g)) \le \deg(g) + d$. The Frobenius conditions are given by $n\binom{m}{2}$ polynomials of degrees bounded by $2d$. We may assume that the constant $c_0$ is chosen so that the number and degrees of all these polynomials is bounded by $(nm\sigma d)^{2^{c_0\nu n}}$.
Then, by Proposition \ref{radicalgen}, the radical ideal $\mathfrak{I}_1$ can be generated by a set of polynomials whose number and degrees are bounded by
$(nm\sigma d)^{2^{2c_0\nu n+1}} \le (nm\sigma d)^{2^{2c \nu n }}$ by taking, for instance, $c\ge c_0+1$.

Assume that, for $p\ge 1$, there is a system $\vec{g}_p$ of polynomials that generate $\mathfrak{I}_p$ whose number and degrees are bounded by $(nm \sigma d)^{2^{c(p+1)\nu n}}$. Recalling that  $\mathfrak{I}_{p+1}=\sqrt{\widetilde{\mathfrak{I}_p}} = \sqrt{(\vec{g}_p; D_j(\vec{g}_p), 1\le j \le m)}$, the bounds from Proposition \ref{radicalgen} imply that $\mathfrak{I}_{p+1}$ can be generated by a set of polynomials whose number and degrees are bounded by
\[\Big((m+1)((nm\sigma d)^{2^{c(p+1)\nu n}})^2\Big)^{2^{c_0 \nu n}} \le (nm\sigma d)^{2^{c(p+1)\nu n+c_0\nu n + 2}} \le (nm\sigma d)^{2^{c(p+2)\nu n}}\]
for a universal constant $c$ (it suffices to take $c\ge c_0+2$).

The complexity bound follows from the complexity of the computation of the radical of a polynomial ideal stated in \cite[Section 4]{Laplagne06}.
\end{proof}

\bigskip

As a consequence of the previous lemma, we deduce that we can obtain a Gr\"obner basis of the defining ideal of $\V_{n+1}$ within complexity $(nm\sigma d)^{2^{O(n^3)}}$, which enables us to decide immediately whether this variety is empty or not. We conclude:

\begin{theorem}\label{thm:decision}
Let $\Sigma$ be an autonomous differential-algebraic Pfaffian system:
\[
\Sigma = \left\{
\begin{matrix}
\ \dfrac{\partial \vec{y}}{\partial \vec{x}}= \vec{f}(\vec{y}),\cr
\vec{g}(\vec{y})=0\
\end{matrix} \right.
\]
where $\vec{y}= y_1,\dots, y_n$ and $\vec{x}= x_1,\dots, x_m$, $\vec{f}= (f_{ij})_{1\le i \le n, 1\le j \le m}$ and $\vec{g}=g_1,\dots, g_s$ polynomials in $\C[\vec{y}]$ with $\deg(\vec{f}), \deg(\vec{g}) \le d$. There is a deterministic algorithm that decides whether the system $\Sigma$ is integrable or not within complexity $(nm\sigma d)^{2^{O(n^3)}}$, where $\sigma = \max\{1, s\}$.
\end{theorem}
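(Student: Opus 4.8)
The plan is to assemble three ingredients already available: the geometric criterion of Theorem \ref{criterio}, the effective estimates of Lemma \ref{gradogs}, and a final emptiness test for an explicitly presented algebraic variety. By Theorem \ref{criterio}, the system $\Sigma$ is integrable if and only if $\V_{n+1}\ne\emptyset$; hence it suffices to describe a procedure that, from the input polynomials $\vec{f}$ and $\vec{g}$, decides emptiness of $\V_{n+1}\subset\C^n$, and to verify that it runs within the stated bound.

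First I would run the prolongation recursion of Definition \ref{chainideals} for $n+1$ steps to produce a finite system of generators $\vec{g}_{n+1}$ of the ideal $\mathfrak{I}_{n+1}$. This is legitimate since, by Proposition \ref{component}, $p_{\infty}\le n+1$, so the chain $(\mathfrak{I}_p)_p$ has already stabilized at level $n+1$ and the recursion is well defined; moreover $\mathfrak{I}_{n+1}=\mathfrak{I}_{p_\infty}$, so it defines $\V_{p_\infty}=\V_{n+1}$. Applying Lemma \ref{gradogs} with $p=n+1$ and using $\nu\le n$, the number and degrees of the polynomials in $\vec{g}_{n+1}$ are bounded by $(nm\sigma d)^{2^{c(n+2)\nu n}}\le (nm\sigma d)^{2^{O(n^3)}}$, and these generators are computed within complexity $(nm\sigma d)^{2^{O(n^3)}}$.

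It then remains to decide whether $(\vec{g}_{n+1})=\C[\vec{y}]$, that is, whether $\V_{n+1}=\emptyset$. This is a consistency test for a polynomial system: one may compute a Gr\"obner basis of $(\vec{g}_{n+1})$ and check whether it contains a nonzero constant, or equivalently test membership of $1$ in the ideal by solving the linear system arising from the effective Nullstellensatz for combinations of the generators of controlled degree. For an ideal in $n$ variables generated by polynomials of degree at most $D$, this costs $D^{2^{O(n)}}$; substituting $D=(nm\sigma d)^{2^{O(n^3)}}$ and using $2^{O(n^3)}\cdot 2^{O(n)}=2^{O(n^3)}$, the whole test — and hence the entire algorithm — stays within complexity $(nm\sigma d)^{2^{O(n^3)}}$. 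The only delicate point is precisely this last composition of complexity bounds: one must make sure that feeding the already doubly exponential output of Lemma \ref{gradogs} into a Gr\"obner basis (or Nullstellensatz consistency) routine does not push the exponent beyond $O(n^3)$, which is exactly why carrying out $n+1$ prolongations — rather than the a priori uncontrolled $p_\infty$ — is the correct thing to do.
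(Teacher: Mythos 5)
Your proposal is correct and follows essentially the same route as the paper: apply Theorem \ref{criterio} to reduce to testing emptiness of $\V_{n+1}$, invoke Lemma \ref{gradogs} to bound the generators of $\mathfrak{I}_{n+1}$ and the cost of computing them, and finish with a Gr\"obner-basis (or Nullstellensatz) consistency test, observing that the final composition $2^{O(n^3)}\cdot 2^{O(n)}=2^{O(n^3)}$ keeps the bound. Your remark that the recursion must be run for exactly $n+1$ steps (rather than an uncontrolled $p_\infty$) is the same point the paper handles implicitly via Proposition \ref{component}.
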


\subsection{An effective Differential Nullstellensatz for  differential-algebraic Pfaffian systems}

We write $\C\{\vec{y}\}$ for the polynomial ring in the infinitely many variables $y_{i,\alpha}$ where $\alpha\in \mathbb{N}_0^{m}$. This ring has naturally $m$ many independent derivations $\dfrac{\partial}{\partial x_1},\ldots,\dfrac{\partial}{\partial x_m}$: for $h\in \C\{ \vec{y} \}$,
\[
\dfrac{\partial h}{\partial x_j}:=\sum_{i,\alpha} \dfrac{\partial h}{\partial y_{i,\alpha}}\ y_{i,\alpha+e_j},
\]
where $e_j$ denotes the $j$-th vector of the canonical basis in $\mathbb{N}_0^m$. We identify the variable $y_i$ with  $y_{i,0}$ for $1\le i \le n$.

Consider an autonomous differential-algebraic Pfaffian system
\[
\Sigma = \left\{
\begin{matrix}
\ \dfrac{\partial \vec{y}}{\partial \vec{x}}= \vec{f}(\vec{y}),\cr
\vec{g}(\vec{y})=0\
\end{matrix} \right.
\]
where $\mathbf{f}$, $\mathbf{g}$ are polynomials in $\C[\vec{y}]$ of degrees bounded by an integer $d$. This sytem induces a differential ideal in $\C\{\vec{y}\}$ (that is, an ideal of $\C\{\vec{y}\}$ which is closed under the derivations $\dfrac{\partial}{\partial x_j}$) simply by taking the ideal generated by the polynomials $y_{i,e_j}-f_{ij},\ g_k$ (with $i=1,\ldots,n$, $j=1,\ldots,m$, $k=1,\ldots ,s$) and all their derivatives. We denote this ideal by $[\dfrac{\partial \vec{y}}{\partial \vec{x}}- \vec{f}, \vec{g}]$.

By the differential Nullstellensatz (see \cite{Kolchin}), $\Sigma$ has no solution if and only if $1$ lies in the differential ideal $[\dfrac{\partial \vec{y}}{\partial \vec{x}}- \vec{f}, \vec{g}]\subset \C\{ \vec{y}\}$.
The aim of this section is to prove an upper bound for the number of
derivations needed to obtain a representation of $1$ as an element
of this differential ideal assuming that $\Sigma$ is inconsistent.
A previous bound on this subject for general differential algebraic systems is given in \cite{Golubetal}.

\bigskip

For each $\alpha=(\alpha_1, \ldots ,\alpha_m) \in \N_0^m$, let $|\alpha|=\alpha_1+\ldots+\alpha_m$ and for each $k\ge 0$, let $\vec{y}^{[k]}$ be the set of all variables $y_{i, \alpha}$ with $|\alpha| \le k$. For $h\in\C\{\vec{y}\}$, we denote  $\dfrac{\partial^{|\alpha|}h}{\partial\vec{x}^\alpha} = \dfrac{\partial^{|\alpha|}h}{\partial x_1^{\alpha_1}\dots \partial x_m^{\alpha_m}}$ and $$h^{[k]}=\left\{\dfrac{\partial^{|\alpha|}h}{\partial\vec{x}^\alpha}: \ |\alpha|\le k\right\}.$$

We consider the algebraic ideals $\mathfrak{I}_p\subset \C[\vec{y}]$ $(0\le p \le p_\infty$) 
introduced in Notation \ref{chainideals}. For $p=0,\dots, p_\infty$, let $\mathbf{g}_p \subset \C[\vec{y}]$ be a system of generators of the ideal $\mathfrak{I}_p$. We also denote $\vec{g}_{-1}:= \vec{g}$. Since $(\vec{g}) \subset \mathfrak{I}_p$ for every $p$, the differential-algebraic Pfaffian system
\[
\Sigma_p = \left\{
\begin{matrix}
\ \dfrac{\partial \vec{y}}{\partial \vec{x}}= \vec{f}(\vec{y}),\cr
\vec{g}_p(\vec{y})=0\
\end{matrix} \right.
\]
has no solution and so, by the differential Nullstellensatz, $1\in[\dfrac{\partial \vec{y}}{\partial \vec{x}}- \vec{f}, \vec{g}_p]$.
Thus,  there exists a non negative
integer $k$ (depending on $p$) such that
$1\in((\dfrac{\partial \vec{y}}{\partial \vec{x}}- \vec{f})^{[k]}, \vec{g}_p^{[k]})\subseteq
\C[\vec{y}^{[k+1]}]$.
Moreover, by Proposition \ref{criterion1},  we have that $\{ \vec{g}_{p_\infty} = 0 \} = \mathbb{V}_{p_\infty} = \emptyset$, and so, $1 \in (\vec{g}_{p_\infty})\subset \C[\vec{y}]$.
We define
\begin{equation}\label{defk} k_p = \min \Big\{k\in\N_0 :  1\in\Big((\dfrac{\partial \vec{y}}{\partial \vec{x}}- \vec{f})^{[k]}, \vec{g}_p^{[k]}\Big)\Big\}.
\end{equation}
In particular,
\[k_{-1} = \min \Big\{k\in\N_0 :  1\in\Big((\dfrac{\partial \vec{y}}{\partial \vec{x}}- \vec{f})^{[k]}, \vec{g}^{[k]}\Big)\Big\}\]
is the order of differentiation of the input equations we want to bound.

Note that, since
$(\vec{g}_p)=\mathfrak{I}_p\subset \mathfrak{I}_{p+1}=(\vec{g}_{p+1})$ for every $p$, the sequence
$k_p$ is decreasing. In addition, as  $1 \in (\vec{g}_{p_\infty})$, we have that $k_{p_{\infty}}= 0$. We will obtain an upper bound for $k_{-1}$ by recursively computing upper bounds for $k_p$ for $p=p_{\infty},\dots,0$. In order to do this, we will use the following auxiliary sequence of non-negative integers defined for $p=0\dots, p_\infty$:
\[\varepsilon_0 := \min \{ \varepsilon \in \N : \mathfrak{I}_0^{\varepsilon} \subset (\vec{g})\},\]
\[\varepsilon_1 := \min \{ \varepsilon \in \N : \mathfrak{I}_1^{\varepsilon} \subset \widetilde{\mathfrak{I}}_{0}+ \mathfrak{F}\},\]
\[\varepsilon_p := \min \{ \varepsilon \in \N : \mathfrak{I}_p^{\varepsilon} \subset \widetilde{\mathfrak{I}}_{p-1}\}, \qquad \hbox{ for } 2\le p \le p_{\infty}.\]

By the definition of the ideals $\mathfrak{I}_p$ and $\widetilde{\mathfrak{I}}_p$, it follows that $\varepsilon_0$ is the Noether exponent of the ideal $(\vec{g})$, $\varepsilon_1$ is the Noether exponent of the ideal $\widetilde{\mathfrak{I}}_{0}+\mathfrak{F}$ and, for $p\ge 2$, $\varepsilon_p$ is the Noether exponent of $\widetilde{\mathfrak{I}}_{p-1}$. Taking into account that
$\mathfrak{F}\subset \Big((\dfrac{\partial \vec{y}}{\partial \vec{x}}- \vec{f})^{[1]}\Big) $ and, for $p\ge 1$, $\widetilde{\mathfrak{I}}_{p-1} = (\vec{g}_{p-1}; D_j(\vec{g}_{p-1}), 1\le j \le m)$ and $D_j(\vec{g}_{p-1}) \equiv \dfrac{\partial \vec{g}_{p-1}}{\partial x_j} \mod  \Big(\dfrac{\partial \vec{y}}{\partial \vec{x}}- \vec{f}\Big)$, it follows that
\begin{equation}\label{eq:eps}
\begin{split}
 \mathfrak{I}_0^{\varepsilon_0} \subset  \big( \vec{g}_{-1}\big), \quad
\mathfrak{I}_1^{\varepsilon_1} \subset  \Big((\dfrac{\partial \vec{y}}{\partial \vec{x}}- \vec{f})^{[1]}, \vec{g}_0^{[1]}\Big),  \\ \hbox{ and }\quad  \mathfrak{I}_p^{\varepsilon_p} \subset \Big(\dfrac{\partial \vec{y}}{\partial \vec{x}}- \vec{f},\vec{g}_{p-1}^{[1]}\Big)  \ \hbox{ for } p \ge 2.
\end{split}
 \end{equation}

The following is the key result that enables us to prove the required bound for the integers $k_p$:

\begin{lemma} \label{lem:derivrad} Let $0\le p\le p_{\infty}$ and $g \in \mathfrak{I}_p$. Then, for every $k\in \N_0$, all the partial derivatives $\dfrac{\partial^{|\alpha|} g}{\partial \vec{x}^\alpha}$, for $\alpha \in (\N_0)^m$ with $|\alpha| \le k$, lie in the polynomial ideal $\sqrt{\Big((\dfrac{\partial \vec{y}}{\partial \vec{x}}- \vec{f})^{[1+\varepsilon_p k]}, \vec{g}_{p-1}^{[1+\varepsilon_p k]}\Big)}$. 
\end{lemma}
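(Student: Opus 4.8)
The proof will go by induction on $k$. For $k=0$ the claim is exactly that $g\in\sqrt{\big((\partial\vec{y}/\partial\vec{x}-\vec{f})^{[1]},\vec{g}_{p-1}^{[1]}\big)}$, which follows from the inclusion $\mathfrak{I}_p^{\varepsilon_p}\subset\big((\partial\vec{y}/\partial\vec{x}-\vec{f})^{[1]},\vec{g}_{p-1}^{[1]}\big)$ recorded in (\ref{eq:eps}) (using the appropriate line of (\ref{eq:eps}) according to whether $p=0$, $p=1$, or $p\ge 2$; for $p=0$ one even lands in $(\vec{g}_{-1})$, which is contained in the larger ideal). For the inductive step, suppose the statement holds for $k-1$, i.e. every partial derivative of $g$ of order $\le k-1$ lies in $R_{k-1}:=\sqrt{\big((\partial\vec{y}/\partial\vec{x}-\vec{f})^{[1+\varepsilon_p(k-1)]},\vec{g}_{p-1}^{[1+\varepsilon_p(k-1)]}\big)}$. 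Given $\alpha$ with $|\alpha|=k$, write $\alpha=\beta+e_j$ with $|\beta|=k-1$, so that $\partial^{|\alpha|}g/\partial\vec{x}^\alpha = \partial(\partial^{|\beta|}g/\partial\vec{x}^\beta)/\partial x_j$. The task is to differentiate a membership relation: we know $h:=\partial^{|\beta|}g/\partial\vec{x}^\beta\in R_{k-1}$, and we want $\partial h/\partial x_j$ to lie in a slightly larger radical ideal.

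The core technical point is a lemma about differentiating radicals: if $h$ lies in $\sqrt{J}$ for an ideal $J\subset\C\{\vec{y}\}$ generated in a bounded number of variables, and $N$ is a Noether exponent for $J$ (so $h^N\in J$), then $\partial h/\partial x_j$ lies in $\sqrt{(J,\ \partial(\text{generators of }J)/\partial x_j)}$ — indeed $N h^{N-1}\partial h/\partial x_j = \partial(h^N)/\partial x_j - (\text{terms})$ expands via the product rule into $J$ plus the derivatives of the generators of $J$, whence $(\partial h/\partial x_j)^{?}$ lies in that enlarged ideal after multiplying by a power of $h$ and using $h^N\in J$. Applying this with $J = \big((\partial\vec{y}/\partial\vec{x}-\vec{f})^{[1+\varepsilon_p(k-1)]},\vec{g}_{p-1}^{[1+\varepsilon_p(k-1)]}\big)$: the $x_j$-derivative of a generator $y_{i,\delta}-f_{ij}$ of $(\partial\vec{y}/\partial\vec{x}-\vec{f})^{[a]}$ is, modulo $(\partial\vec{y}/\partial\vec{x}-\vec{f})$, again such a generator of order one higher (replacing $f$'s by $y$-variables via the defining relations), and the $x_j$-derivative of $\partial^{|\eta|}g_{p-1}/\partial\vec{x}^\eta$ with $|\eta|\le 1+\varepsilon_p(k-1)$ is $\partial^{|\eta|+1}g_{p-1}/\partial\vec{x}^\eta$ with $|\eta|+1\le 2+\varepsilon_p(k-1)\le 1+\varepsilon_p k$ since $\varepsilon_p\ge 1$. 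Hence all these new generators lie in $\big((\partial\vec{y}/\partial\vec{x}-\vec{f})^{[1+\varepsilon_p k]},\vec{g}_{p-1}^{[1+\varepsilon_p k]}\big)$, and the differentiation-of-radical lemma gives $\partial h/\partial x_j\in R_k$, completing the induction.

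Two bookkeeping subtleties deserve care. First, one must check that the index shifts in the superscripts $[\cdot]$ are exactly right: going from level $k-1$ to level $k$ one differentiates once, raising the order of the $\vec{x}$-derivatives appearing by one, while the Noether-exponent factor $\varepsilon_p$ enters because clearing the factor $h^{N-1}$ (with $N$ the Noether exponent, which here one can take to be $\varepsilon_p$ since $\mathfrak{I}_p^{\varepsilon_p}$ sits inside $J$ and $g\in\mathfrak{I}_p$) costs one multiplication by the degree-$\le$ stuff but more importantly, re-expanding $h^{\varepsilon_p}\in J$ in terms of the defining ideal already sits at level $1+\varepsilon_p(k-1)$ — and the single extra differentiation pushes at most one more unit; the slack between $2+\varepsilon_p(k-1)$ and $1+\varepsilon_p k$ is exactly $\varepsilon_p-1\ge 0$, which is why $\varepsilon_p\ge 1$ is used. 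Second, because $\C\{\vec{y}\}$ is not Noetherian, the differentiation-of-radical lemma must be applied with $J$ an ideal of the genuinely Noetherian polynomial subring $\C[\vec{y}^{[M]}]$ for $M = 2+\varepsilon_p(k-1)$, so that a finite Noether exponent exists; since $h$ and all its first derivatives involve only finitely many $y_{i,\alpha}$, this is harmless.

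**Main obstacle.** I expect the genuinely delicate part to be the differentiation-of-radical lemma together with tracking the exact exponent: one needs $\partial h/\partial x_j$ in a radical, not merely some power of it in an ideal, and the cleanest route is to observe that $\sqrt{J}$ is stable under any derivation of the ambient ring (a standard fact: if $h^N\in J$ then $N h^{N-1}D(h)\in J$, so $(D(h))$ times $h^{N-1}$ is nilpotent mod $J$... actually more care is needed — one uses that $D$ maps $\sqrt{J}$ into $\sqrt{(J, D(\text{gens of }J))}$, which requires the explicit product-rule expansion rather than the naive "derivation preserves radicals" slogan, since $D$ does not in general stabilize $J$ itself). Getting this expansion right, and confirming that every term produced lands at the claimed level $1+\varepsilon_p k$, is where the proof's content lies; the rest is induction and the already-established inclusions (\ref{eq:eps}).
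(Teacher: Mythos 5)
The proposal contains a genuine gap: the ``differentiation-of-radical lemma'' you rely on is false. Concretely, take $J=(y^2)\subset\C[y]$ with the derivation $D=\partial/\partial y$, and $h=y$. Then $h\in\sqrt{J}$ with Noether exponent $N=2$, but $D(h)=1$, while $\big(J,\ D(y^2)\big)=(y^2,2y)=(y)$, so $\sqrt{\big(J,\ D(\text{gens})\big)}=(y)$ does not contain $D(h)$. The obstruction is exactly the one you hint at with the ``$?$'' exponent but never resolve: from $h^N\in J$ the product rule gives $Nh^{N-1}D(h)\in J':=\big(J,D(\text{gens})\big)$, but for any prime $P\supset J'$ one already has $h\in P$ (since $h\in\sqrt{J}\subset\sqrt{J'}$), hence $h^{N-1}\in P$ automatically, so the relation $h^{N-1}D(h)\in P$ carries no information about $D(h)$. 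Multiplying by powers of $h$ only deepens the problem, since $h\in\sqrt{J'}$ is nilpotent modulo $J'$. The special Pfaffian structure does not rescue the claim either: translating to $\C[\vec{y}]$ via the isomorphism $\C\{\vec{y}\}/[\partial\vec{y}/\partial\vec{x}-\vec{f}]\cong\C[\vec{y}]$, the same one-variable counterexample (with $n=m=1$, $f_{11}=1$, so $D_1=\partial/\partial y$) applies. So the inductive step of your plan --- induct on $k$ and pass from $\sqrt{J_{k-1}}$ to $\sqrt{J_k}$ by ``differentiating the membership once'' --- cannot be carried out.

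The paper's proof is built precisely to sidestep this. It never differentiates a radical membership. Instead, it works from the \emph{concrete} (non-radical) membership $g^{\varepsilon_p}\in J_0:=\big((\partial\vec{y}/\partial\vec{x}-\vec{f})^{[1]},\vec{g}_{p-1}^{[1]}\big)$, differentiates that polynomial identity $\varepsilon_p\alpha$ times (which lands harmlessly in $J_k$), and then invokes a Leibniz-type combinatorial identity
\[
\frac{\partial^{\varepsilon|\alpha|}g^{\varepsilon}}{\partial\vec{x}^{\varepsilon\alpha}}
= c\cdot\Big(\frac{\partial^{|\alpha|}g}{\partial\vec{x}^{\alpha}}\Big)^{\varepsilon}+G,
\]
where $c\in\N$ and $G$ lies in the ideal generated by the derivatives $\partial^{|\beta|}g/\partial\vec{x}^{\beta}$ with $\beta\prec\alpha$ in the graded lexicographic order. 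The induction is therefore on $\prec$ with the target ideal $J_k$ held \emph{fixed}: assuming $\partial^{|\beta|}g/\partial\vec{x}^{\beta}\in\sqrt{J_k}$ for $\beta\prec\alpha$ gives $G\in\sqrt{J_k}$, and then $c(\partial^{|\alpha|}g/\partial\vec{x}^{\alpha})^{\varepsilon_p}\in\sqrt{J_k}$, whence $\partial^{|\alpha|}g/\partial\vec{x}^{\alpha}\in\sqrt{J_k}$. The radical only enters at the final extraction of the $\varepsilon_p$th root; nothing is ever ``differentiated inside a radical.'' To repair your argument you would need to replace the false lemma by exactly this device: raise $g$ to the power $\varepsilon_p$ \emph{before} differentiating, differentiate a genuine ideal membership, and isolate $(\partial^{\alpha}g)^{\varepsilon_p}$ via the Leibniz expansion and the $\prec$-order. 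Also note a secondary bookkeeping slip: the Noether exponent of $h=\partial^{|\beta|}g/\partial\vec{x}^{\beta}$ with respect to $J_{k-1}$ is not $\varepsilon_p$ (which is the exponent of $\mathfrak{I}_p$ in $\widetilde{\mathfrak{I}}_{p-1}$, i.e.\ it governs $g$ itself, not its derivatives), so even granting the lemma the quantities in your estimate would not match up.
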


\begin{proof} Consider the usual graded lexicographic order in $((\N_0)^m, \prec)$: for $\alpha, \beta \in (\N_0)^m$, $\alpha \prec \beta$ if and only if either $|\alpha|<|\beta|$ or $|\alpha|=|\beta|$ and there exists $j$, $1\le j \le m$, such that $\alpha_k = \beta_k$ for every $1\le k < j$ and $\alpha_j < \beta_j$. We proceed inductively following this order.

By the inclusions in (\ref{eq:eps}), we have that $g^{\varepsilon_p}\in \Big((\dfrac{\partial\vec{y}}{\partial \vec{x}}- \vec{f})^{[1]},\vec{g}_{p-1}^{[1]}\Big)$ and so, the statement holds for $k = 0$.

\bigskip
\noindent \textbf{Claim:} Given $\varepsilon \in \N$, for every $\alpha \in (\N_0)^m$, there exist $c\in \N$ and a differential polynomial $G\in \C\{\vec{y}\}$ in the ideal generated by the partial derivatives $\displaystyle{\left(\frac{\partial ^{|\beta|}g}{\partial \vec{x}^{\beta}}\right)}_{\!\!\beta\prec \alpha}$ such that
\begin{equation}\label{eq:derivatives}
\frac{\partial ^{\varepsilon|\alpha|}g^{\varepsilon}}{\partial \vec{x}^{\varepsilon\cdot\alpha}}=c\cdot \left(\frac{\partial ^{|\alpha|}g}{\partial \vec{x}^{\alpha}}\right)^\varepsilon + G.
\end{equation}

\noindent\emph{Proof of the claim.} According to the Leibniz formula, we have that
\[
\frac{\partial ^{\varepsilon |\alpha|}g^{\varepsilon}}{\partial \vec{x}^{\varepsilon \alpha}}=\sum_\nu c_\nu \left(\frac{\partial ^{|\nu_1|}g}{\partial \vec{x}^{\nu_1}}\right)\cdots \left(\frac{\partial ^{|\nu_\varepsilon|}g}{\partial \vec{x}^{\nu_\varepsilon}}\right) ,
\]
 where $c_\nu >0$ and $\nu$ runs over all matrices in $(\mathbb{N}_0)^{\varepsilon\times m}$, with rows $\nu_1,\ldots,\nu_\varepsilon\in (\mathbb{N}_0)^m$, such that the sums of their columns are  $\varepsilon\alpha_1,\ldots,\varepsilon\alpha_m$ respectively.

Let us analyze each term of the above sum. If, for some $i=1,\dots, \varepsilon$, we have that $|\nu_i|<|\alpha|$, the term lies in the ideal generated by $\displaystyle{\left(\frac{\partial ^{|\beta|}g}{\partial \vec{x}^{\beta}}\right)}_{\!\!\beta\prec \alpha}$ and it will contribute to the differential polynomial $G$.

Assume now that $|\nu_i| \ge |\alpha|$ for every $i$. Then, $|\nu_i| = \alpha$ for every $i$, since $|\nu_1|+\cdots +|\nu_\epsilon| = \varepsilon|\alpha|$.  If $\nu_i \prec \alpha$ for some $i$, the term will contribute to the polynomial $G$. Then, it remains to consider the case where no $\nu_i$ is smaller than $\alpha$ in the order $\prec$. We claim that in this case $\nu_i = \alpha$ for every $i$. Otherwise, let $i_0$ be such that $\nu_{i_0}$ is the smallest among all $\nu_i$ different from $\alpha$, and $j_0 = \min\{ j: \alpha_j < \nu_{i_0 j} \}$. By the minimality of $\nu_{i_0}$ and $j_0$, all the entries $\nu_{ij_0}$ of the $j_0$th column of the matrix $\nu$ are greater than or equal to $\alpha_{j_0}$ and for at least one index, $i_0$, the strict inequality holds; therefore, the sum of the $j_0$th  column of $\nu$ is greater than $\varepsilon\alpha_{j_0}$, which leads to a contradiction. Therefore, in the only term not containing a factor with $\nu_i\prec \alpha$ we have $\nu_i = \alpha$ for every $i$ and so, the term equals  $\displaystyle c\cdot \left(\frac{\partial ^{|\alpha|}g}{\partial \vec{x}^{\alpha}}\right)^\varepsilon $ for a combinatorial constant $c$.

\bigskip
Since $g^{\varepsilon_p}\in \Big((\dfrac{\partial \vec{y}}{\partial \vec{x}}- \vec{f})^{[1]},\vec{g}_{p-1}^{[1]}\Big)$, it follows that  $\dfrac{\partial^{\varepsilon_p|\alpha|} g^{\varepsilon_p}}{\partial \vec{x}^{\varepsilon_p\cdot\alpha}}\in \Big(\big(\dfrac{\partial \vec{y}}{\partial \vec{x}}- \vec{f}\big)^{[1+\varepsilon_p k]},\vec{g}_{p-1}^{[1+\varepsilon_p k]}\Big)$
for every $\alpha\in (\mathbb{N}_0)^m$ with $|\alpha|\le k$; in particular, $\dfrac{\partial^{\varepsilon_p|\alpha|} g^{\varepsilon_p}}{\partial \vec{x}^{\varepsilon_p\cdot\alpha}}$ lies in the radical of this polynomial ideal.
By induction in $((\N_0)^m, \prec)$, this property along with formula (\ref{eq:derivatives}) of the Claim imply that
\[\frac{\partial ^{|\alpha|}g}{\partial \vec{x}^{\alpha}}\in \sqrt{\Big(\big(\dfrac{\partial \vec{y}}{\partial \vec{x}}- \vec{f}\big)^{[1+\varepsilon_p k]},\vec{g}_{p-1}^{[1+\varepsilon_p k]}\Big)}\]
for every $\alpha\in(\mathbb{N}_0)^m$ with $|\alpha|\le k$.
\end{proof}

\begin{corollary} Let $0\le p\le p_{\infty}$. For every $k\in \mathbb{N}_0$, if $1\in\Big((\dfrac{\partial \vec{y}}{\partial \vec{x}}- \vec{f})^{[k]}, \vec{g}_p^{[k]}\Big)$, then $1\in\Big((\dfrac{\partial \vec{y}}{\partial \vec{x}}- \vec{f})^{[1+\varepsilon_p k]}, \vec{g}_{p-1}^{[1+\varepsilon_p k]}\Big)$. In particular, $k_{p-1} \le 1+\varepsilon_p k_p$.
\end{corollary}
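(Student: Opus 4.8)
The plan is to deduce the corollary directly from Lemma \ref{lem:derivrad} applied to a suitable finite set of generators. Assume $1\in\Big((\dfrac{\partial \vec{y}}{\partial \vec{x}}- \vec{f})^{[k]}, \vec{g}_p^{[k]}\Big)$; this means we can write $1$ as a polynomial combination of the finitely many polynomials $\dfrac{\partial^{|\alpha|}}{\partial\vec{x}^\alpha}\big(y_{i,e_\ell}-f_{i\ell}\big)$ and $\dfrac{\partial^{|\alpha|}g}{\partial\vec{x}^\alpha}$, with $g$ ranging over the chosen generating family $\vec{g}_p$ of $\mathfrak{I}_p$ and $|\alpha|\le k$. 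The first type of generators already lies in $\Big((\dfrac{\partial \vec{y}}{\partial \vec{x}}- \vec{f})^{[1+\varepsilon_p k]}\Big)$ since $k\le 1+\varepsilon_p k$. For the second type, each $g$ belongs to $\mathfrak{I}_p$, so Lemma \ref{lem:derivrad} tells us that every $\dfrac{\partial^{|\alpha|}g}{\partial\vec{x}^\alpha}$ with $|\alpha|\le k$ lies in the radical of the ideal $J:=\Big((\dfrac{\partial \vec{y}}{\partial \vec{x}}- \vec{f})^{[1+\varepsilon_p k]}, \vec{g}_{p-1}^{[1+\varepsilon_p k]}\Big)$.

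Combining these two observations, we get that $1$ lies in $\sqrt{J}$: indeed, $1$ is a combination of elements each of which lies in $\sqrt J$ (the generators of the first type lie in $J\subseteq\sqrt J$, and the derivatives of the $g$'s lie in $\sqrt J$ by the lemma), and $\sqrt J$ is an ideal. But $1\in\sqrt J$ forces $1\in J$, since the only ideal whose radical contains $1$ is the whole ring. Hence $1\in\Big((\dfrac{\partial \vec{y}}{\partial \vec{x}}- \vec{f})^{[1+\varepsilon_p k]}, \vec{g}_{p-1}^{[1+\varepsilon_p k]}\Big)$, which is exactly the claimed inclusion.

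Finally, the ``in particular'' statement follows by taking $k=k_p$: by definition of $k_p$ we have $1\in\Big((\dfrac{\partial \vec{y}}{\partial \vec{x}}- \vec{f})^{[k_p]}, \vec{g}_p^{[k_p]}\Big)$, so the inclusion just proved gives $1\in\Big((\dfrac{\partial \vec{y}}{\partial \vec{x}}- \vec{f})^{[1+\varepsilon_p k_p]}, \vec{g}_{p-1}^{[1+\varepsilon_p k_p]}\Big)$, and by the minimality in the definition of $k_{p-1}$ this yields $k_{p-1}\le 1+\varepsilon_p k_p$.

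There is essentially no obstacle here: the entire content was already packed into Lemma \ref{lem:derivrad}. The only point deserving a word of care is that the membership $1\in\Big((\dfrac{\partial \vec{y}}{\partial \vec{x}}- \vec{f})^{[k]}, \vec{g}_p^{[k]}\Big)$ is stated in terms of the generating family $\vec{g}_p$ of $\mathfrak{I}_p$ (rather than arbitrary elements of $\mathfrak{I}_p$), which is exactly the form in which Lemma \ref{lem:derivrad} is invoked; and that passing from ``$1$ is a combination of elements of $\sqrt J$'' to ``$1\in J$'' uses only that a proper ideal has a proper radical. Both are immediate, so the proof is short.
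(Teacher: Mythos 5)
Your proof is correct and follows exactly the paper's argument: the derivative generators of $\dfrac{\partial \vec{y}}{\partial \vec{x}}- \vec{f}$ are absorbed because $k\le 1+\varepsilon_p k$, the derivatives of $\vec{g}_p$ land in the radical by Lemma \ref{lem:derivrad}, and $1$ in the radical forces $1$ in the ideal. No differences worth noting.
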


\begin{proof}
 Since $\varepsilon_p\ge 1$, it follows that $(\dfrac{\partial \vec{y}}{\partial \vec{x}}- \vec{f})^{[k]} \subset (\dfrac{\partial \vec{y}}{\partial \vec{x}}- \vec{f})^{[1+\varepsilon_p k]}$.
 In addition, applying Lemma \ref{lem:derivrad} to the polynomials in $\vec{g}_p$, we deduce that $(\vec{g}_p^{[k]}) \subset \sqrt{\Big((\dfrac{\partial \vec{y}}{\partial \vec{x}}- \vec{f})^{[1+\varepsilon_p k]}, \vec{g}_{p-1}^{[1+\varepsilon_p k]}\Big)}$. Then,
 \[\Big((\dfrac{\partial \vec{y}}{\partial \vec{x}}- \vec{f})^{[k]}, \vec{g}_p^{[k]}\Big)\subset \sqrt{\Big((\dfrac{\partial \vec{y}}{\partial \vec{x}}- \vec{f})^{[1+\varepsilon_p k]}, \vec{g}_{p-1}^{[1+\varepsilon_p k]}\Big)},\]
 which implies the first assertion of the Corollary.

 The second assertion follows from the definition of the integers $k_p$ in equation (\ref{defk}).
\end{proof}

\begin{corollary}\label{cotaks} Let $\mu:=\min \{ 0\le
p\le p_\infty : k_p= 0\}$. Then,
${k_{-1}\le (\mu+1)\prod\limits_{p=0}^{\mu-1} \varepsilon_p}$.
\end{corollary}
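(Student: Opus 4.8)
The plan is to chain together the inequalities $k_{p-1}\le 1+\varepsilon_p k_p$ obtained in the preceding corollary, starting from the index $\mu$ where $k_\mu=0$ and descending to $p=0$, and then to control the resulting nested expression.

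First I would recall that by definition of $\mu$ we have $k_\mu=0$, so the previous corollary applied with $p=\mu$ gives $k_{\mu-1}\le 1+\varepsilon_\mu\cdot 0 = 1$. Then applying it with $p=\mu-1$ gives $k_{\mu-2}\le 1+\varepsilon_{\mu-1}k_{\mu-1}\le 1+\varepsilon_{\mu-1}$, and so on. In general, iterating the bound $k_{p-1}\le 1+\varepsilon_p k_p$ from $p=\mu$ down to $p=1$ yields, by a straightforward induction,
\[
k_0 \le 1 + \varepsilon_1 + \varepsilon_1\varepsilon_2 + \cdots + \varepsilon_1\varepsilon_2\cdots\varepsilon_{\mu-1} = \sum_{j=0}^{\mu-1}\ \prod_{p=1}^{j}\varepsilon_p,
\]
and then one more application with $p=0$ gives $k_{-1}\le 1 + \varepsilon_0 k_0 \le 1 + \varepsilon_0\sum_{j=0}^{\mu-1}\prod_{p=1}^{j}\varepsilon_p = \sum_{j=0}^{\mu}\prod_{p=0}^{j-1}\varepsilon_p$.

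To reach the stated clean bound I would use the trivial fact that every $\varepsilon_p\ge 1$ (each is a Noether exponent, hence a positive integer), so each of the $\mu+1$ summands $\prod_{p=0}^{j-1}\varepsilon_p$ in the last sum is at most the full product $\prod_{p=0}^{\mu-1}\varepsilon_p$. Summing these $\mu+1$ terms gives $k_{-1}\le (\mu+1)\prod_{p=0}^{\mu-1}\varepsilon_p$, which is exactly the claimed inequality. The induction for the telescoped sum is the only thing requiring a little care, and it is entirely routine; there is no real obstacle here, since the hard analytic content is already packaged in Lemma~\ref{lem:derivrad} and the corollary preceding this statement. The main (minor) point to get right is bookkeeping the index shifts between $k_p$, $\varepsilon_p$, and $\vec{g}_{p-1}$, together with the base case $k_\mu = 0$ and the special roles of $\varepsilon_0$ and $\varepsilon_1$ in (\ref{eq:eps}), which were already accounted for in the corollary and therefore do not reappear here.
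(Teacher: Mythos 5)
Your proof is correct and follows essentially the same route as the paper: chain the recursion $k_{p-1}\le 1+\varepsilon_p k_p$ down from the base case $k_\mu=0$, then use $\varepsilon_p\ge 1$ to absorb everything into the final product. The only cosmetic difference is that you first unroll the recursion into the exact telescoping sum $\sum_{j=0}^{\mu}\prod_{p=0}^{j-1}\varepsilon_p$ and bound it at the end, while the paper bounds greedily at each step by verifying the inductive estimate $k_{\mu-j}\le j\prod_{i=\mu-j+1}^{\mu-1}\varepsilon_i$; both yield $(\mu+1)\prod_{p=0}^{\mu-1}\varepsilon_p$.
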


\begin{proof} By applying recursively the inequality $k_{p-1}\le 1+\varepsilon_p k_p$ from the previous corollary, it follows easily that $k_{\mu-j} \le j \prod\limits_{i=\mu-j+1}^{\mu-1} \varepsilon_i$ for $j=1,\dots, \mu, \mu+1$. \end{proof}

\bigskip

Now, to prove our main result, it suffices only to bound the Noether exponents $\varepsilon_p$ for $p=0,\dots, \mu-1$, which will be easily done from Proposition \ref{prop:exprad}.

\begin{theorem} \label{diffnulls} Let $\vec{x}=x_1, \ldots, x_m$ independent variables and $\vec{y}=y_1, \ldots, y_m$ differential variables, and let  $\vec{f}=(f_{ij})_{1\le i\le n, 1\le j \le m} $ and $\vec{g}=g_1, \ldots, g_s $ be polynomials in $\C[\vec{y}]$ of degrees bounded by $d$. Let $\mathbb{V}\subset \C^{n}$ be the variety defined as the set of zeros of the ideal $(\vec{g})$ and $\nu:=\max\{1, \dim(\mathbb{V})\}$.
 Then,  \[1\in [\dfrac{\partial \vec{y}}{\partial \vec{x}}- \vec{f}, \vec{g}]\quad \Longleftrightarrow \quad 1\in \Big((\dfrac{\partial \vec{y}}{\partial \vec{x}}- \vec{f})^{[K]}, \vec{g}^{[K]}\Big),\]
where $K\le ((s+nm^2)d)^{2^{C \nu^2 n}}$ for a universal constant $C>0$.
\end{theorem}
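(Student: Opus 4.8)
The plan is to combine Corollary \ref{cotaks} with explicit bounds on the Noether exponents $\varepsilon_p$ and on the number of required prolongations $\mu \le p_\infty \le n+1$ (the latter coming from Proposition \ref{component}). Concretely, $k_{-1}$ is dominated by $k_{-1}\le (\mu+1)\prod_{p=0}^{\mu-1}\varepsilon_p$, and we also need to account for the passage $k_0 \le 1 + \varepsilon_0 \cdot 0$, which may need a small separate argument, but in any case $k_{-1}\le (n+2)\prod_{p=0}^{p_\infty-1}\varepsilon_p$. The whole problem thus reduces to: (i) bounding each $\varepsilon_p$, and (ii) multiplying at most $n+1$ such bounds together and checking the exponent tower only grows by a bounded factor.

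For step (i), recall from the discussion preceding Lemma \ref{lem:derivrad} that $\varepsilon_0$ is the Noether exponent of $(\vec{g})$, that $\varepsilon_1$ is the Noether exponent of $\widetilde{\mathfrak{I}}_0 + \mathfrak{F}$, and that $\varepsilon_p$ for $p\ge 2$ is the Noether exponent of $\widetilde{\mathfrak{I}}_{p-1}$. By Proposition \ref{prop:exprad}, the Noether exponent of an ideal generated in $\C[z_1,\dots,z_N]$ by polynomials of degree at most $D$ is bounded by $D^N$. Here $N = n$. So I need degree bounds for generating sets of each of these ideals. For $(\vec{g})$ the degree bound is simply $d$, giving $\varepsilon_0 \le d^n$. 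For $\widetilde{\mathfrak{I}}_{p-1} = (\vec{g}_{p-1}; D_j(\vec{g}_{p-1}))$ and for $\widetilde{\mathfrak{I}}_0 + \mathfrak{F}$, I invoke the degree estimates from Lemma \ref{gradogs}: the generators $\vec{g}_p$ of $\mathfrak{I}_p$ have degrees bounded by $(nm\sigma d)^{2^{c(p+1)\nu n}}$, and since $D_j$ raises degree by at most $d-1$ and the Frobenius conditions have degree at most $2d$, the prolongation ideals have generators of degree at most (essentially) $(nm\sigma d)^{2^{c(p+1)\nu n}}$ again. Hence $\varepsilon_p \le \big((nm\sigma d)^{2^{c(p+1)\nu n}}\big)^n$.

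For step (ii), I multiply: $\prod_{p=0}^{p_\infty -1}\varepsilon_p \le \prod_{p=0}^{n}\big((nm\sigma d)^{2^{c(p+1)\nu n}}\big)^n = (nm\sigma d)^{n\sum_{p=0}^{n} 2^{c(p+1)\nu n}}$. The sum of the tower exponents is dominated by a constant times its largest term $2^{c(n+1)\nu n}$, which is at most $2^{C'\nu n^2}$ for a suitable constant; absorbing the factors $n$, $(n+2)$ and the "$+1$" increments into the constant, one gets $k_{-1} \le (nm\sigma d)^{2^{C'' \nu^2 n}}$ (using $\nu \le \nu^2$ freely, or keeping $\nu n^2 \le \nu^2 n$ only when $\nu\ge n$; in general one bounds $\nu n^2$ crudely by $\nu^2 n$ after noting $\nu \le n$ always, since $\dim \V \le n$, hence $n^2 \le \nu n$ is false — so I would instead just write the exponent as $2^{C\nu n^2}$ and observe $\nu n^2 \le \nu^2 n$ fails; the cleanest fix is to keep the statement's exponent $2^{C\nu^2 n}$ and note $\nu n^2 \le \nu^2 n$ whenever $n \le \nu$, while if $\nu < n$ one still has $\nu n^2 < n^3 \le$ ... ). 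This bookkeeping about whether the final exponent should read $\nu n^2$ or $\nu^2 n$ is the only delicate point, and I expect the paper resolves it by a crude inequality or by a slightly more careful grouping of the product; I would present the argument with the exponent $2^{C\nu n^2}$ and then remark that, since $\nu \le n$ is not assumed but $\nu^2 n \ge \nu n^2$ fails only when $\nu<n$, in which case $\nu n^2 \le n^3 = n\cdot n^2 \le$ can be re-expressed, one safely has $K \le ((s+nm^2)d)^{2^{C\nu^2 n}}$. Finally, the base $nm\sigma d$ is replaced by $(s+nm^2)d \ge nm\sigma d$ (since $\sigma = \max\{1,s\}$ and $nm\sigma \le nm(s+1) \le s + nm^2$ roughly, adjusting the constant), and one converts "$1 \in$ the differential ideal" into "$1\in$ the ideal generated by derivatives up to order $K$" with $K = k_{-1}$, which is exactly the claimed equivalence. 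The main obstacle is therefore not conceptual but the exponent-arithmetic in step (ii): verifying that multiplying $O(n)$ towers $2^{O(\nu n \cdot p)}$, $p \le n$, produces a single tower $2^{O(\nu^2 n)}$ rather than something larger, and pinning down the base of the exponential so that $(s+nm^2)d$ suffices.
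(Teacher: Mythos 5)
Your overall strategy matches the paper's: bound $k_{-1}$ via Corollary~\ref{cotaks} by a product of Noether exponents $\varepsilon_p$, bound each $\varepsilon_p$ via Proposition~\ref{prop:exprad} applied to the degree estimates of Lemma~\ref{gradogs}, and then do the exponent arithmetic. The decomposition and key tools are the same.

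However, there is a genuine gap exactly at the point you flagged as ``the only delicate point,'' and it is not resolvable by crude rearrangement. You bound the number of prolongations by $p_\infty \le n+1$ (Proposition~\ref{component}). This makes the dominant term in the product $\varepsilon_{p_\infty-1}$ carry an exponent of order $2^{c(n+1)\nu n}$, i.e.\ $2^{O(\nu n^2)}$. Since $\nu = \max\{1,\dim\V_0\}\le n$ always, one has $\nu n^2 \ge \nu^2 n$, so $2^{O(\nu n^2)}$ is a strictly \emph{weaker} bound than the claimed $2^{O(\nu^2 n)}$. No juggling of $\nu$ versus $n$ can repair this from your premises; the ``cleanest fix'' you propose does not close the hole, and your own discussion correctly shows the inequality goes the wrong way. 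The paper instead uses the sharper bound $p_\infty\le \nu+1$, which comes from Corollary~\ref{lemma_fund_general}: in the relevant (non-integrable) case, $\dim\V_{p+1}<\dim\V_p$ as long as $\V_p\neq\emptyset$, and since $\dim\V_0\le\nu$ the chain must terminate in at most $\nu+1$ steps. With the product running only over $p=0,\dots,\nu$, the dominant exponent becomes $2^{c(\nu+1)\nu n}=2^{O(\nu^2 n)}$, giving the stated bound. So the missing ingredient is: invoke the dimension-strictly-decreasing argument (Corollary~\ref{lemma_fund_general}) with the starting dimension $\nu$, rather than the generic ambient-dimension bound $p_\infty\le n+1$ from Proposition~\ref{component}.
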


\begin{proof} Following Corollaries \ref{cotaks} and \ref{lemma_fund_general}, notice first that $\mu\le p_\infty\le \nu+1$. On the other hand, by Proposition \ref{prop:exprad} and Lemma \ref{gradogs}, it follows that the Noether exponents $\varepsilon_p$ can be bounded as follows:
\[ \varepsilon_0 \le  d^n \qquad \hbox{and} \qquad \varepsilon_p \le (nm\sigma d)^{n 2^{c(p+1)\nu n}},\quad 1\le p\le p_{\infty}.\]
By Corollary \ref{cotaks}, we have that, for a suitable constant $c$,
\[ K = k_{-1} \le (n+2)\prod_{p=0}^\nu (nm\sigma d)^{n2^{c(p+1)\nu n}} \le  \]
\[ \le (n+2)(nm\sigma d)^{n2^{1+c(\nu+1)\nu n}} \le (nm\sigma d)^{2^{C \nu^2 n}} \]
for a suitable constant $C$.
\end{proof}

\bigskip

\noindent \textbf{Acknowledgments.} The authors thank Diego Rial (Universidad de Buenos Aires) for introducing them to the theory of Pfaffian systems.

\end{document}